\theoremstyle{plain}
    \newtheorem{thm}{Theorem}[section]
    \newtheorem{claim}[thm]{Claim}
    \newtheorem{corollary}[thm]{Corollary}
    \newtheorem{fact}[thm]{Fact}
    \newtheorem{lemma}[thm]{Lemma}
    \newtheorem{proposition}[thm]{Proposition}
    \newtheorem{conjecture}[thm]{Conjecture}
    \newtheorem{theorem}[thm]{Theorem}
\theoremstyle{definition}
    \newtheorem{remark}[thm]{Remark}
\theoremstyle{remark}
    \newtheorem{setup}[thm]{}
\newcommand{\C}{\mathbb{C}}
\newcommand{\Q}{\mathbb{Q}}
\newcommand{\PP}{\mathbb{P}}
\newcommand{\OO}{\mathcal{O}}
\newcommand{\aaa}{\operatorname{a}}
\newcommand{\alb}{\operatorname{alb}}
\newcommand{\Alb}{\operatorname{Alb}}
\newcommand{\Exc}{\operatorname{Exc}}
\newcommand{\id}{\operatorname{id}}
\newcommand{\Imm}{\operatorname{Im}}
\newcommand{\Nklt}{\operatorname{Nklt}}
\newcommand{\Supp}{\operatorname{Supp}}
\begin{document}

%\title[Ampleness of log canonical divisors of hyperbolic quasi-projective varieties]{
%Ampleness of log canonical divisors of hyperbolic quasi-projective varieties}

\title[Ampleness of canonical divisors]
{Ampleness of canonical divisors of hyperbolic normal projective varieties}

\author{Fei Hu}
\address
{
\textsc{Department of Mathematics} \endgraf
\textsc{National University of Singapore, 10 Lower Kent Ridge Road,
Singapore 119076
}}
\email{hf@nus.edu.sg}

\author{Sheng Meng}
\address
{
\textsc{Department of Mathematics} \endgraf
\textsc{National University of Singapore, 10 Lower Kent Ridge Road,
Singapore 119076
}}
\email{ms@nus.edu.sg}

\author{De-Qi Zhang}
\address
{
\textsc{Department of Mathematics} \endgraf
\textsc{National University of Singapore, 10 Lower Kent Ridge Road,
Singapore 119076
}}
\email{matzdq@nus.edu.sg}

\begin{abstract}
Let $X$ be a projective variety
which is algebraic Lang hyperbolic.
We show that Lang's conjecture holds (one direction only):
$X$ and all its subvarieties are of general type
and the canonical divisor $K_X$ is ample at smooth points and Kawamata log terminal points of $X$,
provided that $K_X$ is $\Q$-Cartier,
no Calabi-Yau variety is algebraic Lang hyperbolic
and a weak abundance conjecture holds.
\end{abstract}

\subjclass[2000]
{32Q45, %32Q45  Hyperbolic and Kobayashi hyperbolic manifolds
14E30}  %Minimal model program (Mori theory, extremal rays)

\keywords{algebraic Lang hyperbolic variety, ample canonical divisor}

\thanks{}

\maketitle

\section{Introduction}\

We work over the field $\C$ of complex numbers.
A variety
$X$ is {\it Brody hyperbolic} (resp.~{\it algebraic Lang hyperbolic})
if every holomorphic map $V \to X$, where $V$ is the complex line $\C$ (resp.~$V$ is
an abelian variety), is a constant map. Since an abelian variety is a complex torus,
Brody hyperbolicity implies algebraic Lang hyperbolicity.
When $X$ is a compact complex variety, Brody hyperbolicity is equivalent to the
usual Kobayashi hyperbolicity (cf.~\cite{La}).

In the first part (Theorem \ref{ThA} and its consequences \ref{Cor1}, \ref{Cor2}) of this paper,
we let $X$ be a normal projective variety and
aim to show the ampleness of the
{\it canonical divisor $K_X$ of $X$}, assuming that
$X$ is algebraic Lang hyperbolic. We allow $X$ to have arbitrary singularities and
assume only that $X$ is {\it $\Q$-Gorenstein} (so that the ampleness of $K_X$ is well-defined),
i.e., $K_X$ is {\it $\Q$-Cartier:}
$mK_X$ is a Cartier divisor for some positive integer $m$.

For related work,
it was proven in \cite{Pe} that a $3$-dimensional
hyperbolic smooth projective variety $X$ has ample $K_X$ unless $X$ is
a Calabi-Yau manifold where every non-zero effective divisor is ample.
The authors of \cite{HLW} proved the ampleness of $K_X$ when $X$ is a smooth
projective threefold having a K\"ahler metric of negative holomorphic sectional curvature;
they also generalized the results to higher dimensions with some additional conditions.

In the second part of the paper (Theorem \ref{ThC} and its more general form Theorem \ref{ThB}),
we make some contributions toward Lang's conjecture in Corollary \ref{Cor3},
where even the normality of $X$ is not assumed.
Our approach is to take a projective resolution of $X$ and run the relative Minimal Model Program
(MMP) over $X$.
We use only the frame work of MMP, but not its detailed technical part.
Certain mild singularities occur naturally along the way.
See \cite[Definitions 2.34 and 2.37]{KM} for definitions of {\it canonical},
{\it Kawamata log terminal} (klt), and {\it divisorial log terminal} (dlt) {\it singularities}.

In the last part (Proposition \ref{PropA} and its more general form Theorem \ref{ThD}), we try to avoid assuming conjectures.

We now state two conjectures.
Conjecture \ref{nALH} below is long standing.
When $\dim X \le 2$, it is true by the classification of
complex surfaces and the following:

\par \vskip 1pc
{\bf Fact $(*)$}. A (smooth) $K3$ surface has infinitely many
(singular) elliptic curves; see \cite[Theorem in Appendix]{MM} or Proposition \ref{solconj}.

\par \vskip 1pc
In Conjecture \ref{nALH}, the conclusion means the existence of at least one non-constant
holomorphic map $f : V \to X$ from an abelian variety $V$,
but does {\it not} require the image $f(V)$ (or the union of such images) to be Zariski-dense
in $X$. This does not seem sufficient for our purpose to show the non-existence of subvariety $X'$ of Kodaira dimension
zero in an algebraic Lang hyperbolic variety $W$ as in Corollary \ref{Cor3} below
(see \ref{setup1.1}, and think about a proof of the non-hyperbolicity of every normal $K3$ surface
using the Fact $(*)$ above).
Fortunately, we are able to show in Corollary \ref{Cor3}
(or Theorems \ref{ThC} and \ref{ThB}) that the normalization $X$ of $X' \subseteq W$ is a Calabi-Yau variety
and hence $f$ composed with the finite morphism $X \to X' \subseteq W$
produces a non-constant holomorphic map from the abelian variety $V$,
thus deducing a contradiction to the hyperbolicity of $W$.

\begin{conjecture}\label{nALH}
Let $X$ be an absolutely minimal Calabi-Yau variety (cf.~\ref{setup2.1}).
Suppose further that every birational morphism $X \to Y$ onto a normal projective variety is an isomorphism.
Then $X$ is not algebraic Lang hyperbolic.
\end{conjecture}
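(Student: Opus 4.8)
The plan is to exhibit a single non-constant holomorphic map $V\to X$ from an abelian variety $V$, thereby contradicting algebraic Lang hyperbolicity. The key observation, which is exactly the mechanism behind Fact $(*)$, is that it suffices to produce on $X$ \emph{any} rational or elliptic curve: given a rational curve $C\subseteq X$ with normalization $\PP^1\to C$, fixing any elliptic curve $E$ together with a double cover $E\to\PP^1$ yields a non-constant composite $E\to\PP^1\to C\hookrightarrow X$; an elliptic curve works directly through its normalization. More generally it suffices to find any positive-dimensional subvariety dominated by an abelian variety. Thus I would recast the conjecture as the assertion that an absolutely minimal Calabi--Yau variety is never free of such curves.

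First I would pass to a finite quasi-\'etale cover in order to linearize the structure. By the Beauville--Bogomolov--Yau decomposition for klt projective varieties with numerically trivial canonical class (in the singular case due to Druel, Greb--Guenancia--Kebekus, and H\"oring--Peternell), there is a finite quasi-\'etale cover $\pi\colon \widetilde X\to X$ with $\widetilde X\simeq A\times \prod_i Y_i\times \prod_j Z_j$, where $A$ is an abelian variety, each $Y_i$ is a strict Calabi--Yau factor, and each $Z_j$ is an irreducible holomorphic symplectic factor. Since $\pi$ is finite and surjective, it suffices to prove that $\widetilde X$ is not algebraic Lang hyperbolic: any non-constant $V\to\widetilde X$ composes with $\pi$ to a non-constant $V\to X$. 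As $\widetilde X$ is a product, it is then enough to show that one factor is non-hyperbolic, the inclusion of a slice through a general point of the remaining factors being the desired map.

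Next I would dispose of the two easy factor types. If $\dim A>0$ we finish immediately, since for any point $w$ the inclusion $A\times\{w\}\hookrightarrow\widetilde X$ is a non-constant map from an abelian variety. For a symplectic factor $Z_j$, I would invoke the existence of rational curves on projective irreducible holomorphic symplectic manifolds (via uniruled divisors and the Beauville--Bogomolov form) and then apply the reduction of the first paragraph.

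The remaining, and genuinely hard, case is that of a \emph{strict} Calabi--Yau factor $Y$ with no abelian or symplectic part. Here the conjecture reduces to showing that a strict Calabi--Yau variety of dimension $\ge 3$ carries a rational or elliptic curve, equivalently is not hyperbolic. This is where I expect the main obstacle to lie: unlike the surface case, where Fact $(*)$ supplies infinitely many elliptic curves on every K3 surface, there is no unconditional construction of such curves on a general higher-dimensional Calabi--Yau. The natural lines of attack are deformation-theoretic (deforming $Y$ toward a degenerate or large-complex-structure limit where rational curves provably appear, then specializing them back) or enumerative (exhibiting a non-vanishing Gromov--Witten invariant that forces a rational curve); the absolutely-minimal and birational-rigidity hypotheses serve precisely to keep us in the minimal Calabi--Yau regime where these tools apply and where no contraction can destroy the curves we build. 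Making either approach work in full generality is exactly the content that keeps the statement at the level of a conjecture.
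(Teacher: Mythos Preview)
The statement you are addressing is labeled a \emph{Conjecture} in the paper, and the paper does not prove it. There is therefore no ``paper's own proof'' to compare against in general; the only case the paper establishes is $\dim X\le 2$ (Proposition~\ref{solconj}(3)), and that argument proceeds exactly along the line you sketch in your first paragraph: surface classification plus Fact~$(*)$ forces a rational or elliptic curve, hence a non-constant map from an abelian variety. So for the surface case your outline and the paper's argument coincide.

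Your proposal is not a proof but a strategic outline, and you say so yourself in the last sentence. That is the correct assessment: the reduction via the singular Beauville--Bogomolov decomposition to abelian, irreducible symplectic, and strict Calabi--Yau factors is a natural first move, and the abelian factor is indeed trivial to handle. For the symplectic factor your claim that projective IHS varieties always carry rational curves is expected and known in the smooth case, but in the singular setting relevant here it is not entirely routine; you would need to be careful about which version of the decomposition you invoke and whether the symplectic factors are smooth. More importantly, the strict Calabi--Yau factor in dimension $\ge 3$ is, as you correctly identify, the whole difficulty, and neither the deformation-theoretic nor the Gromov--Witten approach you mention is known to work unconditionally. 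The hypotheses on absolute minimality and birational rigidity do not obviously help with either approach; they are there in the paper so that the conjecture need only be assumed in this restricted form, not because they unlock a proof.

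In short: you have not proved the conjecture, nor does the paper; you have given a reasonable decomposition of the problem and located the genuine obstruction in the same place the literature does.
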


We need the result below about {\it nef reduction map} and {\it nef dimension}.
A meromorphic map $f : X \dasharrow Y$ between complex varieties is {\it almost holomorphic}
if it is well defined on a Zariski dense open subset $U$ of $X$
and the map $f_{\, | U} : U \to Y$ has compact connected general fibres.

\begin{theorem}\label{Th8aut} (cf.~\cite[Theorem 2.1]{8aut})
Let $L$ be a nef $\Q$-Cartier divisor on a normal projective variety $X$. Then
there exists an almost holomorphic, dominant rational map $f : X \dasharrow Y$ with
connected fibres, called a ``nef reduction map'' such that
\begin{itemize}
\item[(1)] $L$ is numerically trivial on all compact fibres $F$ of $f$ with
$\dim F = \dim X - \dim Y$;
\item[(2)] for every general point $x \in X$ and every irreducible curve $C$ passing
through $x$ with $\dim f(C) > 0$, we have $L . C > 0$.
\end{itemize}
The map $f$ is unique up to birational equivalence of $Y$.
We call $\dim Y$ the ``nef dimension'' of $L$ and
denote it as $n(L)$.
\end{theorem}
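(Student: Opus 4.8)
The plan is to build $f$ as the quotient of $X$ by the equivalence relation generated by $L$-trivial curves, and to read off properties (1) and (2) from the maximality of this relation. First I would reduce to the case where $L$ is an honest line bundle: since $L$ is $\Q$-Cartier, some multiple $mL$ is Cartier, and the intersection numbers $(mL)\cdot C = m\,(L\cdot C)$ as well as the associated reduction map are unaffected by replacing $L$ by $mL$; so I may assume $L$ is a nef Cartier divisor. I would work on the normal variety $X$ directly, where the Chow variety of curves and the $L$-degree of a curve are both available, so no resolution is needed.

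Next I would introduce the $L$-equivalence relation: declare $x \sim y$ when $x$ and $y$ can be joined by a connected chain of irreducible curves $C$ with $L \cdot C = 0$. The irreducible $L$-trivial curves are parametrized by a countable union of irreducible components of $\Chow(X)$, each of which is a bounded proper family, because the condition $L \cdot C = 0$ is numerical and therefore cuts out a union of Chow components. The core of the argument is then to quotient by $\sim$: I would invoke Campana's theory of quotients by an equivalence relation generated by a countable family of proper algebraic families of cycles, which produces an almost holomorphic dominant rational map $f : X \dashrightarrow Y$ onto a normal projective $Y$ whose very general fibre $A(x)$ is exactly the $\sim$-class of a very general point $x$. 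The subtle point, and the one I expect to be the main obstacle, is that although each $L$-trivial curve has $L$-degree zero, these curves need not be bounded with respect to a fixed ample divisor $H$: the $H$-degrees of $L$-trivial curves can be arbitrarily large. What rescues the construction is that, for a very general $x$, the class $A(x)$ is swept out by chains of uniformly bounded length drawn from finitely many of the countably many Chow components, and countability guarantees that a very general $x$ avoids every proper ``bad'' locus; making this boundedness-of-chains statement precise is the heart of the matter.

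With $f$ in hand, property (2) is nearly formal: if $x$ is general and $C$ is an irreducible curve through $x$ with $\dim f(C) > 0$, then $C$ is not contained in a fibre, so $C$ cannot be one of the $L$-trivial curves generating $\sim$ (otherwise $f$ would contract it and $f(C)$ would be a point); maximality of $\sim$ then forces $L \cdot C > 0$. For property (1), the very general fibre $F = A(x)$ is covered by $L$-trivial curves and is a single $\sim$-class, and a connectedness argument shows $L|_F \equiv 0$: any irreducible curve in $F$ either already lies in a generating family or, were it to have positive $L$-degree, would contradict the fact that $F$ is a single equivalence class of maximal dimension. To upgrade this from the very general fibre to every compact fibre $F$ with $\dim F = \dim X - \dim Y$, I would use that such fibres are limits of general fibres together with the closedness of the numerical-triviality condition under specialization of cycles.

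Finally, uniqueness up to birational equivalence of $Y$ follows from the intrinsic description of the general fibres as $\sim$-classes: any two almost holomorphic maps with properties (1) and (2) share the same very general fibres, hence factor birationally through one another. I would then set $n(L) := \dim Y$, well-defined by this uniqueness. The only genuinely hard input is the quotient construction of the second paragraph; everything else is bookkeeping around the $L$-equivalence relation and standard specialization arguments for intersection numbers.
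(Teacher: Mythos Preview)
The paper does not prove this statement at all: its ``proof'' consists of the single line ``See \cite{8aut} for the proof.'' The theorem is quoted as a black box from the cited reference (Bauer--Campana--Eckl--Kebekus--Peternell--Rams--Szemberg--Wotzlaw), and is used only as input for the nef reduction maps appearing later in the arguments for Theorems~\ref{ThA} and~\ref{ThB}.

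Your proposal, by contrast, is a genuine sketch of the construction carried out in \cite{8aut}, and it follows that source's strategy closely: define the equivalence relation generated by $L$-trivial curves, appeal to Campana's quotient machinery to produce the almost holomorphic map, and read off (1), (2), and uniqueness from the intrinsic description of the fibres as equivalence classes. The overall shape is correct, and you have correctly identified the hard point (boundedness of chains / well-definedness of the quotient for a countable union of Chow components). Two places where your sketch is thinner than the actual argument in \cite{8aut}: first, the passage from ``very general fibre is an $L$-equivalence class'' to ``$L|_F \equiv 0$ on every compact fibre of expected dimension'' needs more than a specialization remark, since numerical triviality of a divisor is not in general closed under specialization of the base point of a family of subvarieties; in \cite{8aut} this is handled via a more careful analysis. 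Second, your justification of (1) on the general fibre (``would contradict that $F$ is a single equivalence class of maximal dimension'') is not quite an argument as stated; the point in \cite{8aut} is rather that the fibre is covered by $L$-trivial curves through a general point, which forces $L|_F$ to be numerically trivial. These are refinements rather than gaps, and your outline is a fair summary of the cited proof---just be aware that the paper you are reading does not itself supply any of this.
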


\begin{proof}
See \cite{8aut} for the proof.
\end{proof}

Next we state Conjecture \ref{Abund}.
We stress that \ref{Abund} without the extra ``Hyp(A)''
is the usual abundance conjecture and {\it stronger} than our one here.
When $K_X$ is nef and big or when
$\dim X \le 3$, both Conjectures \ref{Abund} (1) and \ref{Abund} (2) (and their log version, even
without the extra Hyp(A)) are true; see \cite[Theorem 3.3, \S 3.13]{KM},
or Proposition \ref{solconj}.

\begin{conjecture}\label{Abund}
Let $X$ be an $n$-dimensional minimal normal projective variety, i.e.,
the canonical divisor $K_X$ is a nef $\Q$-Cartier divisor.
Assume Hyp(A): the nef dimension $n(K_X)$ satisfies $n(K_X) = n$.
\begin{itemize}
\item[(1)]
If $X$ has at worst klt singularities,
then
$K_X$ is semi-ample, i.e.,
the linear system $|mK_X|$ is base-point free for some $m > 0$.
\item[(2)]
If $X$ has at worst canonical singularities and
$K_X \not\equiv 0$ $($not numerically zero$)$, then
the Kodaira dimension $\kappa(X) > 0$.
\end{itemize}
\end{conjecture}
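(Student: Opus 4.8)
The plan is to reduce both parts to a single \emph{bigness} statement for $K_X$ and then feed it into the classical base-point-free theorem. The starting point is the chain of inequalities
\[
\kappa(X) \le \nu(K_X) \le n(K_X) = n ,
\]
valid for the nef $\Q$-Cartier divisor $K_X$, where $\nu(K_X)$ is the numerical dimension, the middle inequality is a general fact for nef divisors (cf.\ \cite{8aut}), and the last equality is Hyp(A). Thus Hyp(A) places $K_X$ in the numerically largest regime permitted by its dimension, and the entire difficulty is to show that this numerical largeness is witnessed by honest sections.

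For part (1), I would first try to upgrade Hyp(A) to the bigness $\nu(K_X) = n$, equivalently $(K_X)^n > 0$. Granting this, $K_X$ is nef and big on the klt variety $X$; taking any integer $a \ge 2$, the divisor $aK_X - K_X = (a-1)K_X$ is again nef and big, so the Kawamata base-point-free theorem (see \cite{KM}) applies directly and yields that $|mK_X|$ is base-point free for some $m > 0$. This is precisely the assertion of (1). In other words, once the gap between $n(K_X) = n$ and $\nu(K_X) = n$ is closed, part (1) is immediate, and all its content is concentrated in that single step.

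For part (2), I would pursue the weaker nonvanishing target. Since $K_X \not\equiv 0$ we already have $\nu(K_X) \ge 1$, so it suffices to promote numerical positivity to a moving section, i.e.\ to prove $\kappa(X) \ge 1$ (equivalently to exclude $\kappa(X) = 0$). One route simply reuses part (1): as $X$ is canonical it is in particular klt, so the bigness statement above would make $K_X$ semi-ample, with its Iitaka fibration coinciding with the nef reduction map of Theorem \ref{Th8aut}, whence $\kappa(X) = n(K_X) = n \ge 1$. A softer route, avoiding bigness, is to establish nonvanishing $\kappa(X) \ge 0$ and then exclude the rigid case $\kappa(X) = 0$: under $\kappa(X) = 0$ abundance predicts $K_X \equiv 0$, contradicting the hypothesis, and the task is to make this implication unconditional under Hyp(A).

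The main obstacle, in both parts, is exactly the abundance-type passage from numerical or nef positivity to Kodaira positivity, namely the equalities $\nu(K_X) = n(K_X)$ and $\kappa(X) = \nu(K_X)$. This is the abundance conjecture, open in general; the purpose of Hyp(A) is to remove the fibred cases (where $K_X$ is numerically trivial along a positive-dimensional family) and to reduce part (1) to the numerically maximal situation, in which the base-point-free theorem finishes the job the moment bigness is known. Concretely, under Hyp(A) the nef reduction map $f : X \dasharrow Y$ of Theorem \ref{Th8aut} is birational, so by its property (2) every irreducible curve through a general point of $X$ meets $K_X$ positively. The hard part is that such positivity on general curves does not by itself force bigness -- irrational nef classes on abelian surfaces already exhibit $\nu < n$ while intersecting every curve strictly positively -- so one must exploit the rationality of $K_X$ together with the klt or canonical hypothesis, i.e.\ input genuinely beyond Theorem \ref{Th8aut}, to conclude. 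It is this last step that keeps the statement at the level of a conjecture.
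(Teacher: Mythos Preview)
The statement you are addressing is a \emph{conjecture}, not a theorem, and the paper does not prove it. Immediately after stating Conjecture~\ref{Abund} the authors note that it is a weakening of the usual abundance conjecture and that both parts are known when $K_X$ is nef and big or when $\dim X \le 3$, with a pointer to \cite[Theorem~3.3, \S 3.13]{KM} and Proposition~\ref{solconj}; beyond those cases the paper simply \emph{assumes} the conjecture as a hypothesis in Theorems~\ref{ThA}, \ref{ThB}, \ref{ThC} and Corollary~\ref{Cor3}.

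Your write-up is therefore not a proof and could not be one, as you yourself concede in the final sentence. What you have produced is a correct reduction analysis: you observe that under Hyp(A) the nef reduction is birational, that bigness of $K_X$ would then follow from closing the gap $\nu(K_X) = n(K_X)$, and that once $K_X$ is nef and big the base-point-free theorem on a klt variety gives semi-ampleness immediately. All of this is accurate, and it matches the paper's own remark that the nef-and-big case is already settled. But the step you flag as ``the main obstacle'' --- passing from $n(K_X) = n$ to $\nu(K_X) = n$, or from $\nu(K_X) \ge 1$ to $\kappa(X) \ge 1$ --- is precisely the open content of the conjecture, and your proposal supplies no mechanism for it. In particular, your ``softer route'' for part~(2) is circular: you invoke the implication ``$\kappa(X) = 0 \Rightarrow K_X \equiv 0$'', but that implication \emph{is} abundance in Kodaira dimension zero, which is exactly what is being conjectured.

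In short: there is no gap to diagnose in a proof, because there is no proof --- neither in the paper nor in your proposal. Your discussion is a reasonable outline of why Hyp(A) isolates the hard case, but it should be labelled as commentary on an open problem, not as a proof attempt.
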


\par \vskip 1pc
Theorems \ref{ThA} and \ref{ThC} below are our main results.
When $X$ has at worst klt singularities, Theorem \ref{ThA} below follows from
the MMP and has been generalized to the quasi-projective case
in \cite{LZ}.
In Theorem \ref{ThA}, we do not impose any condition on the
singularities of $X$, except the $\Q$-Cartierness of $K_X$.
This assumption is necessary to formulate the conclusion
that $K_X$ be ample.
Without assuming Conjecture \ref{nALH} or \ref{Abund} as in Theorem \ref{ThA}, we can
at least say that $K_{X}$ is movable or nef in codimension-one
(cf.~Remark \ref{rThA}). See also Corollaries \ref{Cor1} and \ref{Cor2}
when $\dim X \le 3$.

\begin{theorem}\label{ThA}
Let $X$ be a $\Q$-Gorenstein normal projective variety
which is algebraic Lang hyperbolic.
Assume that Conjecture $\ref{nALH}$ holds for all varieties
birational to $X$, or to any
subvariety of $X$. Further, assume that Conjecture $\ref{Abund} (1)$ holds for
all varieties birational to $X$.

Then $K_X$ is ample at smooth points and klt points of $X$.
To be precise, there is a birational morphism $f_c : X_c \to X$
such that $X_c$ has at worst klt singularities, $K_{X_c}$ is ample, and
$E_c := f_c^*K_X - K_{X_c}$ is an effective and $f_c$-exceptional divisor
with $f_c(E_c) \subseteq \Nklt(X)$, the non-klt locus of $X$.
%{\rm (cf.~\cite[\S 4.4]{Fj}).}

In particular, $f_c = \id$ and $K_X$
is ample, if $X$ has at worst klt singularities.
%{\rm (cf.~\ref{rThA})}.
\end{theorem}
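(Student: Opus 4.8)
The plan is to reduce to the case of klt singularities and then to push $K_{X_c}$ up the chain \emph{nef} $\Rightarrow$ \emph{maximal nef dimension} $\Rightarrow$ \emph{semi-ample} $\Rightarrow$ \emph{ample}, excluding at each stage the degenerate output of the MMP by combining the hyperbolicity of $X$ with Conjectures \ref{nALH} and \ref{Abund}(1).

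First I would record how hyperbolicity is used: neither $X$ nor any of its subvarieties contains a rational curve. Indeed, a nonconstant morphism $\BPP^1 \to X$ composed with a degree-two cover $E \to \BPP^1$ from an elliptic curve $E$ yields a nonconstant map from an abelian variety into $X$, which is excluded; in particular $X$ is not uniruled. To accommodate the arbitrary $\Q$-Gorenstein singularities I would then invoke the MMP to construct a klt modification $f_c : X_c \to X$: a projective birational morphism from a $\Q$-factorial klt variety $X_c$ that is an isomorphism over the klt locus of $X$, extracts only divisors lying over $\Nklt(X)$, and has $K_{X_c}$ nef over $X$. Since $K_{X_c} - f_c^* K_X$ is then $f_c$-nef and $f_c$-exceptional, the Negativity Lemma shows $E_c := f_c^* K_X - K_{X_c}$ is effective, $f_c$-exceptional, and supported over $\Nklt(X)$. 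The statement therefore reduces to proving $K_{X_c}$ ample, and when $X$ is already klt one has $X_c = X$ and $E_c = 0$.

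I would next observe that $K_{X_c}$ is in fact nef. A $K_{X_c}$-negative extremal ray is, by the Cone Theorem, spanned by a rational curve $C$; if $f_c(C)$ is a curve then it is a rational curve in $X$, impossible, while if $f_c(C)$ is a point then $K_{X_c}\cdot C \ge 0$ because $K_{X_c}$ is $f_c$-nef --- either way a contradiction, so no such ray exists. The decisive step is then to establish Hyp(A), i.e.\ $n(K_{X_c}) = \dim X_c$. If instead $n(K_{X_c}) < \dim X_c$, then the nef reduction map of Theorem \ref{Th8aut} has a positive-dimensional general fibre $F$ that is klt with $K_F \equiv 0$, hence of Kodaira dimension $0$ and carrying an absolutely minimal Calabi--Yau model. \emph{This is where I expect the main obstacle.} Conjecture \ref{nALH} only produces a nonconstant map from an abelian variety into that Calabi--Yau model, whereas $F$ is tied to $X$ only through birational maps. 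I would resolve this exactly as flagged in the introduction: replace $F$ by its image subvariety $F' \subseteq X$, pass to the normalization $\widetilde{F'} \to F'$, show that $\widetilde{F'}$ is \emph{itself} an absolutely minimal Calabi--Yau variety, apply Conjecture \ref{nALH} to $\widetilde{F'}$ directly, and compose the resulting map $V \to \widetilde{F'}$ with the finite morphism $\widetilde{F'} \to F' \hookrightarrow X$. This gives a nonconstant map from an abelian variety into $X$, contradicting hyperbolicity; hence $n(K_{X_c}) = \dim X_c$.

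Finally, with Hyp(A) in force, Conjecture \ref{Abund}(1) makes $K_{X_c}$ semi-ample, and maximality of the nef dimension forces the associated contraction $\phi : X_c \to X_{\mathrm{can}}$ to be birational and crepant with $K_{X_c}$ big. To pass from semi-ampleness to ampleness I would use that the positive-dimensional fibres of $\phi$ are rationally chain connected (being fibres of a crepant birational contraction of klt varieties), hence contain rational curves; any such curve whose $f_c$-image were a curve in $X$ would again violate hyperbolicity, so every positive-dimensional fibre of $\phi$ is contracted by $f_c$ and lies over $\Nklt(X)$. Thus $f_c$ factors through $\phi$, inducing a birational morphism $X_{\mathrm{can}} \to X$ which is an isomorphism over the klt locus. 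Renaming $X_{\mathrm{can}}$ as $X_c$ gives a $\Q$-factorial klt variety with $K_{X_c}$ ample and a birational morphism $f_c$ to $X$; recomputing $E_c = f_c^* K_X - K_{X_c} \ge 0$ by the Negativity Lemma with $f_c(E_c) \subseteq \Nklt(X)$ completes the proof. If $X$ is klt, no modification occurs, $f_c = \id$, and $K_X$ is ample.
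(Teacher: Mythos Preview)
Your proposal is correct and follows essentially the same route as the paper's proof: dlt/klt modification to reduce to a klt model with relatively nef canonical class, nefness from hyperbolicity via the Cone Theorem, ruling out small nef dimension by showing the normalization of the image in $X$ of a general nef-reduction fibre is Calabi--Yau and invoking Conjecture~\ref{nALH}, then abundance plus Hacon--McKernan rational chain connectedness of the fibres of the Iitaka map to pass from semi-ample to ample. The only presentational differences are that the paper builds the modification explicitly as a dlt blowup (Fujino) followed by a relative BCHM run and tracks the inequality $E'\ge E_{f'}$ through pushforwards to control $f_c(E_c)$, whereas you invoke a ``klt modification'' as a black box and recover $E_c\ge 0$ via the Negativity Lemma; and the paper separates the case $n(K_{X'})=0$ (where $X$ itself is shown to be Calabi--Yau) from $0<n(K_{X'})<n$, while you treat them uniformly. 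Two small over-claims to tidy: $X_{\mathrm{can}}$ need not be $\Q$-factorial (nor does the theorem require it), and ``lies over $\Nklt(X)$'' for the positive-dimensional fibres of $\phi$ is not needed---what matters is only that they are contracted by $f_c$ so that $f_c$ factors through $\phi$.
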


The normality of $X$ is not assumed in Theorem \ref{ThC} below
which is a special case of Theorem \ref{ThB} by letting $g : X \to W$ there to be
the identity map $\id_X : X \to X$.
When $\dim X \le 3$, Case (3) below does not occur.

\begin{theorem}\label{ThC}
Let $X$ be an algebraic Lang hyperbolic projective variety of dimension $n$.
Assume either $n \le 3$ or Conjecture $\ref{Abund} (2)$
$($resp.~either $n \le 3$ or Conjecture $\ref{Abund} (2)$ without the extra Hyp(A)$)$
holds for all varieties birational to $X$.

Then there is a birational surjective morphism $g_m : X_m \to X$ such that
$X_m$ is a minimal variety with at worst canonical
singularities and one of the following is true.
\begin{itemize}
\item[(1)]
$K_{X_m}$ is ample. Hence both $X_m$ and $X$ are of general type.
\item[(2)]
$g_m : X_m \to X$ is the normalization map. $X_m$ is an absolutely minimal
%{\rm (in the sense of \ref{rThA})}
Calabi-Yau variety with $\dim X_m \ge 3$.
\item[(3)]
There is an almost holomorphic map $\tau : X_m \dasharrow Y$
$($resp.~a holomorphic map $\tau: X_m \to Y)$
such that its general fibre $F$ is an absolutely minimal
Calabi-Yau variety with $3 \le \dim F < \dim X_m$,
and $(g_m)_{|F} : F \to g_m(F) \subset X$ is the normalization map.
\end{itemize}
\end{theorem}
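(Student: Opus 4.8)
The plan is to pass to a smooth birational model, run the minimal model program relatively over $X$, and then read off the three cases from the nef dimension of the resulting canonical divisor. The one geometric input extracted from hyperbolicity is the elementary remark that $X$ contains \emph{no} rational curve: for an elliptic curve $E$ the quotient $E \to E/\{\pm 1\} \simeq \PP^1$ is a non-constant holomorphic image of an abelian variety, so $\PP^1$ is not algebraic Lang hyperbolic, and hence neither $X$ nor any finite image of $X$ can contain a copy of $\PP^1$. In particular $X$ is not uniruled, and more generally $X$ carries no non-constant image of any abelian variety. These two facts are what will be played against the machinery of the MMP.

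First I would normalize and resolve, obtaining a birational morphism $p \colon \tilde{X} \to X$ with $\tilde{X}$ smooth, factoring through the normalization $X^\nu \to X$; algebraic Lang hyperbolicity descends along the finite map $X^\nu \to X$, and non-uniruledness is a birational invariant. Next I would run the $K_{\tilde{X}}$-MMP relatively over $X$. Since $p$ is birational, no relative Mori fibre space can occur over the generic point, so (in dimension $\le 3$ unconditionally, and otherwise within the MMP framework) the program terminates at a relative minimal model $g_m \colon X_m \to X$, a birational morphism with $X_m$ having at worst canonical singularities and $K_{X_m}$ nef over $X$. The crucial point is that relative nefness is upgraded to absolute nefness by hyperbolicity: any $K_{X_m}$-negative extremal ray is generated by a rational curve $C$, and $g_m(C)$ is either a point, which forces $C$ to be $g_m$-exceptional and contradicts $g_m$-nefness, or a rational curve in $X$, contradicting the non-existence of rational curves. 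Hence $K_{X_m}$ is nef and $X_m$ is a genuine minimal variety mapping onto $X$ by a birational morphism.

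With $X_m$ in hand I would split according to the nef dimension $n(K_{X_m})$ of Theorem \ref{Th8aut}. If $n(K_{X_m}) = \dim X_m$, then Hyp(A) holds and Conjecture \ref{Abund}\,(2) forces $\kappa(X_m) = \dim X_m$, so $K_{X_m}$ is big and, being nef, semi-ample; passing to its canonical model (which still admits a birational morphism to $X$, because the contracted fibres are rationally chain connected and so map to points of $X$) lands us in Case (1) with $K_{X_m}$ ample. If $n(K_{X_m}) = 0$ then $K_{X_m} \equiv 0$ and $X_m$ is a minimal Calabi-Yau, giving Case (2); and if $0 < n(K_{X_m}) < \dim X_m$, the nef reduction map $\tau \colon X_m \dasharrow Y$ has general fibre $F$ with $K_{X_m}|_F \equiv 0$, so $F$ is Calabi-Yau, giving Case (3). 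When the stronger form of Conjecture \ref{Abund} without Hyp(A) is assumed, $K_{X_m}$ is semi-ample and the almost holomorphic $\tau$ is replaced by the honest fibration defined by $|mK_{X_m}|$, yielding the parenthetical versions.

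It remains to pin down the Calabi-Yau fibres, and this is where I expect the main difficulty. I would show that $g_m$ restricted to a general fibre $F$ (or to $X_m$ itself in Case (2)) is finite and birational onto its image, hence the normalization map, since $F$ is normal; the delicate point is to rule out that $\Exc(g_m)$ meets the general fibre in a curve contracted by $g_m$, so that $g_m|_F$ is genuinely finite. Granting finiteness, the dimension bound $\dim F \ge 3$ is then \emph{forced} by hyperbolicity together with Fact~$(*)$: a Calabi-Yau variety of dimension $\le 2$ contains an elliptic curve (an abelian surface is one, while every K3, Enriques, or bielliptic surface carries one), which the finite morphism $g_m|_F$ would carry to a non-constant image of an abelian variety in $X$, a contradiction. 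Finally I would perform further crepant birational contractions to render each such $F$ absolutely minimal in the sense of \ref{setup2.1}, checking once more, via the rational chain connectedness of the contracted fibres and the absence of rational curves in $X$, that the relevant maps to $X$ stay morphisms restricting to normalizations on fibres. The interplay between the exceptional locus of $g_m$ and the fibration $\tau$, needed to secure simultaneously the finiteness of $g_m|_F$ and the absolute minimality of $F$, is the real technical heart of the argument.
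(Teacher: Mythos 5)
Your overall architecture (resolve, run a relative MMP over $X$, upgrade relative nefness of $K_{X_m}$ to nefness via the cone theorem and the absence of rational curves, then trichotomize by the nef dimension of $K_{X_m}$) matches the paper's, but there is a genuine gap at exactly the step you yourself flag as ``the real technical heart'' and leave unresolved. You stop at a relative \emph{minimal} model, so you only know $K_{X_m}$ is nef over $X$. The paper instead takes the relative \emph{log canonical} model of a resolution over $X$ (Lemma \ref{genfin}: since the resolution is generically finite over $X$, its canonical divisor is relatively big, so \cite[Theorem 1.2]{BCHM} produces $X_m$ with $K_{X_m}$ \emph{relatively ample} over $X$). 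That relative ampleness is what closes the holes you leave open: in Case (3), if a curve $C$ in a general fibre $F$ of the nef reduction were contracted by $g_m$, relative ampleness would give $K_{X_m}\cdot C>0$, contradicting $(K_{X_m})_{|F}\equiv 0$; hence $g_m|_F$ is finite. With only relative nefness this argument collapses --- a $g_m$-exceptional curve with $K_{X_m}\cdot C=0$ is perfectly consistent with your setup, and you offer no substitute. The same mechanism gives finiteness of $g_m$ onto its image in Case (2), and in Case (1) it shows the crepant contraction to the canonical model contracts no curve, so $K_{X_m}$ is already ample on the \emph{same} model used in the other cases. Since finiteness of $g_m|_F$ is the prerequisite for everything downstream (hyperbolicity of $F$, hence $q(F)=0$ and $\dim F\ge 3$ via Fact $(*)$ and Proposition \ref{solconj}), the proposal is incomplete at precisely this point.

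Two smaller corrections. First, Conjecture \ref{Abund}\,(2) asserts only $\kappa(X_m)>0$, not $\kappa(X_m)=\dim X_m$; the passage from $\kappa>0$ with maximal nef dimension to semi-ampleness goes through Kawamata's goodness results (Remark \ref{rgenfin}\,(2), citing \cite{Ka85b}), a step you cannot skip by misquoting the conjecture. Second, ``absolutely minimal'' in the sense of \ref{setup2.1} is not something you achieve by performing further crepant contractions: it is the property that every birational map from a klt normal projective variety to the given variety is a morphism, and it holds \emph{automatically} for any algebraic Lang hyperbolic variety by Remark \ref{rThA}\,(4), because fibres of a resolution of the graph are rationally chain connected by \cite[Corollary 1.5]{HM} while hyperbolic varieties contain no rational curves. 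So once finiteness of $g_m|_F$ --- and hence hyperbolicity of $F$ --- is secured, absolute minimality comes for free; your plan to contract further is both unnecessary and not what the statement asks for.
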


Given a projective variety $W$, let $\widetilde{W} \overset{\sigma}\to W$ be a
projective resolution. We define the {\it albanese variety
of $W$} as $\Alb(W) := \Alb(\widetilde{W})$, which is independent of the choice
of the resolution $\widetilde{W}$, since every two resolutions of $W$ are
dominated by a third one and the albanese variety, being an abelian variety,
contains no rational curves. We define
the {\it albanese $($rational$)$ map} $\alb_W : W \dasharrow \Alb(W)$
as the composition
$$W \overset{\sigma^{-1}}\dasharrow \widetilde{W}
\overset{\alb_{\widetilde{W}}}\longrightarrow \Alb(\widetilde{W}) .$$
%which is unique up to birational equivalence.

One direction of Lang's \cite[Conjecture 5.6]{La}
follows from Conjectures \ref{nALH} and \ref{Abund} (2). See
Remark \ref{rThA} (6) for the other direction.

\begin{corollary}\label{Cor3}
Let $W$ be an algebraic Lang hyperbolic projective variety of dimension $n$.
Assume either $n \le 3$ or Conjecture $\ref{Abund} (2)$ holds for all varieties of dimension $\le n$.
Then we have:
\begin{itemize}
\item[(1)]
If Conjecture $\ref{nALH}$ holds for all varieties of dimension $\le n$,
then $W$ and all its subvarieties are of general type.
\item[(2)]
If the albanese map  $\alb_W : W \dasharrow \Alb(W)$
has general fibres
%in the sense of \ref{setup2.1},
of dimension $\le 2$, then $W$ is of general type.
\end{itemize}
\end{corollary}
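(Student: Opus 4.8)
The plan is to run every relevant subvariety of $W$ through Theorem \ref{ThC} and to exclude its cases (2) and (3), so that case (1), general type, is the only surviving possibility. I first record the inheritance of hyperbolicity: if $X'\subseteq W$ is a subvariety and $V\to X'$ is a holomorphic map from an abelian variety $V$, then composing with the inclusion gives a holomorphic map $V\to W$, which is constant; hence $X'$ is again algebraic Lang hyperbolic. More generally, any variety admitting a finite morphism onto a subvariety of $W$ is algebraic Lang hyperbolic, because a finite morphism has finite (hence, on the connected source $V$, necessarily constant) image fibres, so it carries non-constant maps to non-constant maps. This single observation is what converts the Calabi-Yau outputs of Theorem \ref{ThC} into a contradiction with the hyperbolicity of $W$.

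For part (1), I would apply Theorem \ref{ThC} to each subvariety $X'\subseteq W$ (including $X'=W$); this is legitimate since $\dim X'\le n$, so the standing assumption (either $n\le 3$, or Conjecture \ref{Abund}(2) in all dimensions $\le n$) supplies the hypothesis of Theorem \ref{ThC} for $X'$. In cases (2) and (3) we obtain an absolutely minimal Calabi-Yau variety $F$ of dimension $\ge 3$ — namely $F=X'_m$ in case (2), and $F$ a general fibre of $\tau$ in case (3) — together with a finite morphism $(g_m)_{|F}$ from $F$ onto a subvariety of $W$. By the inheritance above, $F$ is algebraic Lang hyperbolic. On the other hand $F$ satisfies the hypotheses of Conjecture \ref{nALH}, which we assume in all dimensions $\le n$; hence $F$ is \emph{not} algebraic Lang hyperbolic, so there is a non-constant holomorphic map $V\to F$ from an abelian variety $V$. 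Composing with the finite morphism onto $W$ produces a non-constant map $V\to W$, contradicting the hyperbolicity of $W$. Thus only case (1) survives and $X'$ is of general type; as $X'$ was arbitrary, $W$ and all its subvarieties are of general type.

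For part (2), I would again apply Theorem \ref{ThC} to $W$ and exclude cases (2) and (3), but now replacing Conjecture \ref{nALH} by the albanese hypothesis. In each case we are handed a Calabi-Yau variety $F$ of dimension $d\ge 3$ that is finite over a subvariety of $W$: take $F=W_m$ (the normalization of $W$) in case (2), and $F$ a general fibre of $\tau$ in case (3). The key is to bound the fibre dimension of the albanese map of $F$. Since $F$ is either $W_m$ or a general fibre, its image $g_m(F)$ in $W$ passes through a general point of $W$, where $\alb_W$ is defined and has a fibre of dimension $\le 2$; hence the image of $F$ in $\Alb(W)$ has dimension $\ge d-2$. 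By functoriality the composite $F\to\Alb(W)$ factors through $\alb_F$, and by Kawamata's theorem the albanese map $\alb_F$ of the Calabi-Yau variety $F$ is surjective; so this image is the image of a homomorphism $\Alb(F)\to\Alb(W)$, whence $\dim\Alb(F)\ge d-2\ge 1$ and the general fibre $F_0$ of $\alb_F$ has dimension $d-\dim\Alb(F)\le 2$ with $K_{F_0}\equiv 0$. If $\dim F_0=0$ then $F$ is isomorphic to the abelian variety $\Alb(F)$ and is not algebraic Lang hyperbolic; otherwise $F_0$ is a Calabi-Yau variety of dimension $1$ or $2$, which by the classification of surfaces together with Fact $(*)$ (Proposition \ref{solconj}) is not algebraic Lang hyperbolic. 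In either case there is a non-constant map from an abelian variety into $F_0\subseteq F$, hence into $W$ through the finite morphism, contradicting the hyperbolicity of $W$. Therefore case (1) holds and $W$ is of general type.

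The main obstacle is the albanese bookkeeping in part (2). One must genuinely control the fibre dimension of $\alb_F$, which rests on two technical points. First, Kawamata's surjectivity of the albanese map, together with numerical triviality of the canonical class of its general fibre, has to be available for a possibly singular minimal Calabi-Yau variety; this is cleanest to establish on a resolution and then descend. Second, the inequality $\dim\Alb(F)\ge d-2$ uses that $F$ is a \emph{general} fibre of $\tau$, so that $g_m(F)$ sweeps through general points of $W$ and is not absorbed into an exceptional, high-dimensional fibre of the merely rational map $\alb_W$; making this genericity precise is the delicate step. By comparison, the repeated principle that a finite morphism preserves non-constancy of maps from abelian varieties, and the check (needed in part (1)) that the outputs of Theorem \ref{ThC} meet the hypotheses of Conjecture \ref{nALH}, are routine.
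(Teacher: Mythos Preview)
Your argument for part (1) is essentially the paper's: apply the trichotomy (the paper uses Theorem \ref{ThB} on the inclusion $X\hookrightarrow W$, you use Theorem \ref{ThC} on the hyperbolic subvariety directly, which amounts to the same thing), and in cases (2)--(3) feed the resulting Calabi-Yau $F$ into Conjecture \ref{nALH} to get a non-constant map $V\to F\to W$.

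For part (2) your route diverges from the paper's and becomes unnecessarily tangled because you overlook one fact already built into the output of Theorem \ref{ThC}: by the paper's definition (see \ref{setup2.1}(vi)), a Calabi-Yau variety has $q=0$, so $\Alb(F)$ is a \emph{point}. The paper exploits this immediately: since $q(F)=0$, the universal property forces $F\to\Alb(X_m)=\Alb(W)$ to be constant, hence $g_m(F)$ lies inside a single fibre of $\alb_W$, and that fibre then has dimension $\ge\dim F\ge 3$, contradicting the hypothesis. (Case \ref{ThC}(2) is even simpler: $q(W_m)=0$ makes $\Alb(W)$ a point, so the unique fibre is $W$ itself.) Your chain instead derives $\dim\Alb(F)\ge d-2\ge 1$, which is already the contradiction with $q(F)=0$; but you do not recognize it as such and proceed to analyze a putative positive-dimensional $\alb_F$ with fibres $F_0$, invoking Kawamata's surjectivity (Lemma \ref{alb}) whose hypothesis is precisely $q(F)>0$. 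That subsequent analysis is superfluous in this paper's framework, and the case split on $\dim F_0$ introduces further loose ends (e.g.\ why $F$ would be \emph{isomorphic} to $\Alb(F)$ when $\dim F_0=0$). Once you use $q(F)=0$, all of this collapses to the two-line argument the paper gives.
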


Without assuming Conjecture \ref{Abund} (or \ref{nALH}), we have the following
(see Theorem \ref{ThD} for a generalization). For a singular projective variety $Z$, we define
the {\it Kodaira dimension} $\kappa(Z)$ as $\kappa(\widetilde{Z})$ (cf.~\cite[\S 7.73]{KM})
for some (or equivalently any) projective resolution $\widetilde{Z} \to Z$.

\begin{proposition}\label{PropA}
Let $X$ be an algebraic Lang hyperbolic projective variety.
Assume one of the following conditions.
\begin{itemize}
\item[(i)]
$X$ has maximal albanese dimension, i.e.,
the albanese map $\alb_X : X \dasharrow \Alb(X)$
is generically finite $($but not necessarily surjective$)$.
\item[(ii)]
The Kodaira dimension $\kappa(X) \ge \dim X - 2$.
\item[(iii)]
$\kappa(X) \ge \dim X - 3$, and Conjecture $\ref{nALH}$ holds in dimension three.
\end{itemize}
Then $X$ is of general type.
\end{proposition}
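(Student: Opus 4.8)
The plan is to argue by contraposition, uniformly across the three hypotheses: assuming $\kappa(X) < \dim X$, I would exhibit a non-constant holomorphic map from an abelian variety into $X$, contradicting algebraic Lang hyperbolicity. The guiding idea is to locate a positive-dimensional subvariety $F \subseteq X$ of Kodaira dimension zero whose normalization is a genuine Calabi--Yau variety of small dimension, and then to invoke the non-hyperbolicity of such Calabi--Yau varieties, transporting the resulting map back to $X$ through the \emph{finite} normalization morphism. It is exactly the finiteness that makes this transport harmless, since a finite morphism preserves non-constancy (this is the device highlighted in the Introduction for composing with $X \to X' \subseteq W$).

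Under hypotheses (ii) and (iii) I would work through the Iitaka fibration, which requires no conjectures. Passing to a resolution $\sigma : \widetilde{X} \to X$, a general fibre $\widetilde{F}$ of the Iitaka fibration is smooth with $\kappa(\widetilde{F}) = 0$ and $\dim \widetilde{F} = \dim X - \kappa(X)$, hence of dimension at most $2$ under (ii) and at most $3$ under (iii). Its image $F := \sigma(\widetilde{F}) \subseteq X$ satisfies $\kappa(F) = 0$ and is itself algebraic Lang hyperbolic. Since $\dim F \le 3$, the minimal model program and abundance are available unconditionally, so I would produce a Calabi--Yau variety $F_c$ of dimension at most $3$ together with a finite morphism $F_c \to F \subseteq X$; obtaining a finite (rather than merely birational) morphism here is precisely the role of the normalization map in Theorem \ref{ThC}. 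If $\dim F_c \le 2$, then by the classification of surfaces and Fact $(*)$ the variety $F_c$ contains an elliptic curve; if $\dim F_c = 3$, Conjecture \ref{nALH} in dimension three applies. Either way $F_c$ is not algebraic Lang hyperbolic, so there is a non-constant map $V \to F_c$ from an abelian variety $V$, and composing with the finite morphism $F_c \to X$ gives the desired non-constant map $V \to X$.

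Under hypothesis (i) I would use the albanese map directly. On a resolution, the albanese morphism is generically finite onto its image $Z \subseteq \Alb(X)$, so $\dim Z = \dim X$; since a generically finite dominant morphism does not decrease Kodaira dimension (by the ramification formula), $\kappa(X) < \dim X = \dim Z$ forces $\kappa(Z) < \dim Z$, whence $Z$ is not of general type. By the structure theorem of Ueno and Kawamata for subvarieties of abelian varieties, the connected stabilizer $A_1 \subseteq \Alb(X)$ of $Z$ is then positive-dimensional and $Z$ is fibred by the cosets of $A_1$. Pulling a general coset back through the generically finite albanese map yields a subvariety of $X$ of maximal albanese dimension and Kodaira dimension zero lying over an abelian variety, to which the Calabi--Yau normalization analysis of the previous paragraph again applies, once more producing a non-constant map from an abelian variety into $X$.

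The main obstacle is the step upgrading the numerical datum $\kappa(F) = 0$ to a genuine Calabi--Yau variety equipped with a \emph{finite} morphism to $X$. A bare birational minimal model would only yield a map from an abelian variety after resolving indeterminacy, which is delicate because the image of an abelian variety can meet the exceptional locus of the modification. Guaranteeing that the relevant modification restricts to the normalization map on the fibre in question --- exactly as in Theorem \ref{ThC} --- and checking that the low-dimensional inputs from the minimal model program and from abundance genuinely suffice, is where the real work lies; in case (i) the analogous difficulty is controlling the ramification of the generically finite albanese map while transporting the cosets of $A_1$.
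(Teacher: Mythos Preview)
Your treatment of cases (ii) and (iii) is essentially correct and matches the paper's route: feed the general Iitaka fibre $F$ (of dimension $\le 2$, resp.\ $\le 3$) into Theorem~\ref{ThC}, which outputs the normalization $F^n \to F \subseteq X$ as a \emph{finite} morphism with $F^n$ an absolutely minimal Calabi--Yau variety, and then invoke Proposition~\ref{solconj}(3) (for $\dim \le 2$) or the assumed Conjecture~\ref{nALH} in dimension three. The paper packages this via the intermediate Theorem~\ref{ThD}, but the substance is the same.

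Your argument for case (i), however, has a genuine gap, and the obstacle is not the ramification issue you flag at the end. The claim that the preimage $G := \alb_{\widetilde X}^{-1}(a + A_1)$ of a general coset has $\kappa(G) = 0$ is unjustified: generic finiteness of $G \to a + A_1$ only yields $\kappa(G) \ge 0$, and equality would require $\widetilde{X} \to Z/A_1$ to be (birational to) the Iitaka fibration of $\widetilde{X}$, i.e.\ $\kappa(X) = \kappa(Z)$, which you have not established. Even granting $\kappa(G) = 0$, you have no control on $\dim G = \dim A_1$, so the ``previous paragraph'' analysis --- which rests on MMP and abundance in dimension $\le 3$, or on Conjecture~\ref{nALH} in dimension three --- simply does not apply. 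The paper avoids the Ueno--Kawamata detour entirely: it applies Theorem~\ref{ThC} directly to $X$, invoking Fujino's abundance theorem for minimal varieties of maximal albanese dimension (Proposition~\ref{solconj}(6)) to make $K_{X_m}$ semi-ample. If Case~\ref{ThC}(2) or (3) were to occur, the resulting Calabi--Yau variety $F$ (either $X_m$ itself or a general fibre of $\Phi_{|sK_{X_m}|}$) has $q(F) = 0$; but the restriction $(\alb_{X_m})_{|F}$ is still generically finite onto its image $F_a \subseteq \Alb(X)$, and since $\kappa(F_a) \le \kappa(F) = 0$ this image is a translate of an abelian subvariety of dimension $\dim F \ge 3$, forcing $q(F) \ge \dim F_a > 0$. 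That contradiction uses no Conjecture~\ref{nALH} at all, which is the whole point of case (i).
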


\begin{remark}\label{rThA}
(1) In Theorem \ref{ThA},
by the equality $f_c^*K_X = K_{X_c} + E_c$ and the ampleness of $K_{X_c}$, the {\it exceptional locus} $\Exc(f_c)$ (the subset of $X_c$ along which
$f_c$ is not isomorphic) is contained in $\Supp E_c$. Indeed, if $C$ is an $f_c$-contractible curve, then
$0 = C . f_c^*K_X = C . K_{X_c} + C . E_c > C . E_c$, so $C \subseteq \Supp E_c$.
This and the effectivity of $E_c$ justifies the
phrasal: $K_{X}$ is ample outside $f_c(E_c)$.

(2) Without assuming Conjecture \ref{nALH} or \ref{Abund},
the proof of Theorem \ref{ThA} (Claim \ref{nef} and the equality (\ref{EqC2}) above it)
shows that $(f')^*K_X = K_{X'} + E'$ with $K_{X'}$ nef and $E'  \ge 0$ $f'$-exceptional.
Hence $K_X = f'_*K_{X'}$ is movable, or nef in codimension-one.

(3) Let $X_2 \to X_1$ be a finite morphism (but not necessarily surjective).
If $X_1$ is Brody hyperbolic or algebraic Lang hyperbolic then so is $X_2$.
The converse is not true.

(4) Every algebraic Lang hyperbolic projective variety $X_1$ is absolutely minimal
in the sense of \ref{setup2.1}, i.e.,
every birational map $h: X_2 \dasharrow X_1$ from
a normal projective variety $X_2$ with at worst klt singularities, is a well defined morphism.
This result was proved by S.~Kobayashi when $X_2$
is nonsingular. Indeed, let $Z$ be a resolution of the graph of $h$ such that
we have a birational surjective morphisms $p_i : Z \to X_i$
satisfying $h \circ p_2 = p_1$. Then every fibre $p_2^{-1} (x_2)$ is rationally chain connected
by \cite[Corollary 1.5]{HM} and hence $p_1(p_2^{-1} (x_2))$ is a point since
hyperbolic $X_1$ contains no rational curve.
Thus $h$ can be extended to a well defined morphism by \cite[Proof of Lemma 14]{Ka81},
noting that $X_2$ is normal and $p_2$ is surjective, and using the Stein factorization.

(5) If $Y$ is an algebraic Lang hyperbolic Calabi-Yau variety (like $X_m$ and $F$ in
Theorem \ref{ThC} (2) and (3), respectively), then
every birational morphism $h: Y \to Z$ onto a normal projective variety is an isomorphism.
Indeed, by \cite[Corollary 1.5]{Ka88}, $Z$ has only
canonical singularities. Thus the exceptional locus $\Exc(h)$ is covered by
rational curves by \cite[Corollary 1.5]{HM}. Since $Y$ is hyperbolic and hence has no rational
curve, we have $\Exc(h) = \emptyset$ and hence $h$ is an isomorphism, $Z$ being normal
and by the Stein factorization.

(6) Consider the converse of Corollary \ref{Cor3}, i.e., the other direction of Lang
\cite[Conjecture 5.6]{La}, but with the assumption that every non-uniruled projective variety
has a minimal model with at worst canonical singularities
and that abundance Conjecture \ref{Abund} (2) holds.
To be precise, supposing that a projective variety $W$ and all its subvarieties are of
general type, we see that $W$ is algebraic Lang hyperbolic. Indeed, let $f: V \to W$
be a morphism from an abelian variety $V$
and let $V \to X \to f(V)$ be its Stein factorization,
where $V \to X$ has a connected general fibre $F$ and $X \to f(V)$
is a finite morphism. Since $V$ is non-uniruled, so is $F$. Hence $\kappa(F) \ge 0$ by the assumption.
The assumption and Iitaka's $C_{n,m}$ also
imply that $0 = \kappa(V) \ge \kappa(F) + \kappa(X) \ge \kappa(X) \ge \kappa(f(V)) = \dim f(V)$
(cf.~\cite[Corollary 1.2]{Ka85}). Hence $f$ is a constant map.
\end{remark}

\begin{setup}\label{setup1.1}
{\bf Comments about the proofs.}
{\rm
In our proofs, neither the existence of minimal model nor the termination
of MMP is assumed. Let $W$ be an algebraic Lang hyperbolic projective variety.
To show that every subvariety $X$ of $W$ is of general type,
one key observation is the existence of a birational model $X'$ of $X$ with
$K_{X'}$ relative nef over $W$, by using
the main Theorem $1.2$ in \cite{BCHM}.
{\it $K_{X'}$ is indeed nef} since $W$ is hyperbolic (cf.~Lemma \ref{rnef} or \ref{genfin}).
One natural approach is to take a general fibre $F$ (which may not even be normal)
of an Iitaka (rational)
fibration of $X$ (assuming $\kappa(X) \ge 0$)
and prove that $F$ has a minimal model $F_m$. Next, one tries to show that $q(F_m) = 0$
and $F_m$ is a Calabi-Yau variety and then tries to use Conjecture \ref{nALH} to produce
a non-hyperbolic subvariety $S$ of $F_m$, but this does not guarantee the
same on $F \subset X$ (to contradict the hyperbolicity of $X$)
because such $S \subseteq F_m$ might be contracted to a point on $F$.
In our approach, we are able to show that the {\it normalization of $F$ is a Calabi-Yau variety,
which is the key of the proofs.} It would not help even if one assumes the smoothness of the ambient space
$W$ since its subvariety $X$ may not be smooth, or at least normal or Cohen-Macaulay to define
the canonical divisor $K_X$ meaningfully to pull back or push forward.
}
\end{setup}

\par \vskip 1pc \noindent
{\bf Acknowledgement.}
We would like to thank the referee for very careful reading, many suggestions to
improve the paper and the insistence on clarity of exposition.
The last named-author is partially supported by an ARF of NUS.

\section{Preliminary results}

\begin{setup}\label{setup2.1}
{\bf Convention, notation and terminology}

{\it In this paper, by hyperbolic we mean algebraic Lang hyperbolic}.
{\rm

\begin{itemize}
\item[(i)]
We use the notation and terminology in the book of Hartshorne and the book \cite{KM}.

\item[(ii)]
Given two morphisms $g_i : Y_i \to Z$ ($i = 1, 2$) between varieties, a rational map $Y_1 \dasharrow Y_2$
is said to be {\it a map over} $Z$, if the composition $Y_1 \dasharrow Y_2 \overset{g_2}\to Z$
coincides with $g_1 : Y_1 \to Z$.

\item[(iii)]
For a rational map $h : X \dasharrow Y$, we take a birational resolution
$\pi : W \to X$ of the indeterminacy of $h$
such that the composition $h \circ \pi$ is a well defined morphism: $h_1 : W \to Y$.
For a point $y \in Y$, we defined the {\it fibre} $h^{-1} (y)$ as $\pi(h_1^{-1} (y))$.
This definition does not depend on the choice of the resolution $\pi$ of $h$, since every two such resolutions are dominated by a third one.

\item[(iv)]
For a singular projective variety $Z$, we define
the {\it Kodaira dimension} $\kappa(Z)$ as $\kappa(\widetilde{Z})$ (cf.~\cite[\S 7.73]{KM})
for some (or equivalently any) projective resolution $\widetilde{Z} \to Z$.
When $\kappa(\widetilde{Z}) \ge 0$, there is a {\it $($rational$)$ Iitaka fibration},
unique up to birational equivalence,
$I_{Z} : Z \dasharrow Y$ such that its very general fibre
$F$ has $\kappa(F) = 0$ and that $\dim Y = \kappa(Z)$.

\item[(v)]
For two Weil $\Q$-divisors $D_i$ on a normal variety $X$, if $m(D_1 - D_2) \sim 0$ (linear equivalence)
for some integer $m > 0$, we say that $D_1$ and $D_2$ are $\Q$-linearly equivalent
and denote this relation as: $D_1 \sim_{\Q} D_2$.
\item[(vi)]
Let $X$ be a normal projective variety.
$X$ is a {\it Calabi-Yau variety} if
$X$ has at worst canonical singularities,
its canonical divisor is $\Q$-linearly equivalent to zero: $K_X \sim_{\Q} 0$,
and the {\it irregularity} $q(X) := h^1(X, \OO_X) = 0$.
If this is the case, $X$ has Kodaira dimension $\kappa(X) = 0$.

\item[(vii)]
A projective variety
$X$ is {\it of general type} if some (equivalently every) projective resolution $X'$ of
$X$ has maximal Kodaira dimension: $\kappa(X') = \dim X'$.

\item[(viii)]
A $\Q$-Gorenstein variety $X$ is {\it minimal} if the canonical divisor $K_X$ is {\it numerically
effective} (= {\it nef}).
A projective variety $X_1$ is {\it absolutely minimal} if
every birational map $h: X_2 \dasharrow X_1$ from
a normal projective variety $X_2$ with at worst klt singularities, is a well defined morphism.

\end{itemize}
}
\end{setup}

\begin{proposition}\label{solconj}
\begin{itemize}
\item[(1)]
Let $X$ be a projective surface. Then either it has infinitely many rational curves or elliptic curves,
or it is of general type, or it is birational to a simple abelian surface.
\item[(2)]
Let $Y$ be a normal projective surface such that $K_Y \sim_{\Q} 0$ and $Y$ is birational to an abelian surface $A$.
Then $Y$ is isomorphic to $A$.
\item[(3)]
Let $Z$ be a normal projective surface with $K_Z \sim_{\Q} 0$. Then $Z$ is not algebraic Lang hyperbolic.
In particular, Conjecture \ref{nALH} holds when dimension $\le 2$.
\item[(4)]
In dimension $\le 3$, both Conjectures \ref{Abund} (1) and (2) even without the extra Hyp(A) (and even for
log canonical pairs) hold.
\item[(5)]
Both Conjectures \ref{Abund} (1) and (2) even without the extra Hyp(A) hold for varieties of general type.
\item[(6)]
Let $X$ be a variety with maximal albanese dimension, i.e., $\dim \alb_X(X) = \dim X$.
If $X$ has only canonical singularities and $K_X$ is nef, then $K_X$ is semi-ample.
In particular,
Conjecture \ref{Abund} (2) even without the extra Hyp(A) holds for varieties with maximal albanese dimension.
\end{itemize}
\end{proposition}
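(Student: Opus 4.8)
The plan is to split the six assertions into two groups. Parts (1)--(3) concern projective \emph{surfaces} and I would prove them from the Enriques--Kodaira classification together with one discrepancy computation on a minimal resolution; parts (4)--(6) are instances of abundance in ranges where it is already a theorem, and I would deduce them by quoting the literature. For (1), first replace $X$ by a smooth projective model $\widetilde X\to X$. Since ``of general type'', ``birational to a simple abelian surface'', and ``carrying infinitely many rational (resp.\ elliptic) curves'' all survive this replacement (only finitely many curves are contracted, and each contracted curve is rational, so infinitely many rational or elliptic curves on $\widetilde X$ descend to infinitely many on $X$), it suffices to argue on $\widetilde X$ and split by Kodaira dimension. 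If $\kappa(\widetilde X)=-\infty$ the surface is ruled and the ruling gives infinitely many rational curves; if $\kappa=1$ the Iitaka fibration is an elliptic fibration with infinitely many elliptic fibres; if $\kappa=2$ we are of general type. In the case $\kappa=0$ the minimal model is abelian, K3, Enriques, or bielliptic: a K3 has infinitely many elliptic curves by Fact $(*)$; Enriques and bielliptic surfaces carry elliptic pencils; and a non-simple abelian surface contains an elliptic curve, hence by translation infinitely many, leaving ``birational to a simple abelian surface'' as the only remaining alternative.

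The technical heart is a single computation driving both (2) and (3). Given a normal projective surface $V$ with $K_V\sim_{\Q}0$, take the minimal resolution $\mu:\widetilde V\to V$ and set $\Delta:=K_{\widetilde V}-\mu^*K_V=\sum a_iE_i$. Minimality gives $\Delta\cdot E_j=K_{\widetilde V}\cdot E_j\ge 0$ for every exceptional $E_j$ (using $\mu^*K_V\cdot E_j=0$), so the negativity lemma forces $\Delta\le 0$, whence $K_{\widetilde V}\sim_{\Q}\Delta\le 0$. On the other hand $\kappa(\widetilde V)=0$ (because $V$ is birational to $A$ in (2), or $K_V\sim_{\Q}0$ in (3)), so $mK_{\widetilde V}\sim D\ge 0$ for some $m>0$; then $D-m\Delta$ is effective and $\sim_{\Q}0$, hence numerically trivial, hence zero, which forces $\Delta=0$ and $K_{\widetilde V}\sim_{\Q}0$. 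Thus $V$ has at worst canonical singularities and $\widetilde V$ is a smooth minimal surface of Kodaira dimension $0$.

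For (2) this identifies $\widetilde Y$ with the unique minimal model $A$; the birational morphism $A\to Y$ can contract only rational curves, of which $A$ has none, so $Y\cong A$. For (3) I would run through the classification of $\widetilde Z$: in the abelian case $Z\cong A$ as just argued and the identity is a non-constant map from an abelian variety; in the bielliptic case the finite \'etale cover by a product of two elliptic curves composes with $\mu$ to a non-constant map; and in the K3 and Enriques cases $\widetilde Z$ contains an elliptic curve (Fact $(*)$, resp.\ an elliptic pencil on the Enriques surface or its K3 cover). Since $\Exc(\mu)$ consists of rational curves, a genus-$1$ curve is never contracted, so its image in $Z$ is again an elliptic curve; in every case we obtain a non-constant holomorphic map from an abelian variety to $Z$, so $Z$ is not hyperbolic. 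The ``in particular'' for Conjecture \ref{nALH} in dimension $\le 2$ is then immediate, since a Calabi--Yau surface has $K\sim_{\Q}0$.

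Finally, (4)--(6) I would settle by citation. Part (4) and the general-type case of (5) are the classical abundance and base-point-free results: in dimension $\le 3$ a nef $K_X$ is semi-ample and $K_X\not\equiv 0$ gives $\kappa(X)>0$ (cf.\ \cite[Theorem 3.3, \S 3.13]{KM} and three-dimensional abundance for log canonical pairs), while for $K_X$ nef and big the base-point-free theorem yields semi-ampleness directly, with $\kappa(X)=\dim X>0$ trivial. Part (6) is the known abundance statement for varieties of maximal Albanese dimension, from which Conjecture \ref{Abund}(2) without Hyp(A) follows in that class. I expect the main obstacle to lie entirely in (3): making the case analysis uniform and, in particular, verifying in each class that the abelian-variety map produced on $\widetilde Z$ genuinely descends to a \emph{non-constant} map on the possibly singular $Z$, rather than being absorbed into the contracted locus.
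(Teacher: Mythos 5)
Most of your proposal is sound: (1), (4), (5), (6) match the paper's treatment (which simply cites the surface classification, \cite[\S 3.13, Theorem 3.3]{KM} and \cite[Theorem 3.6]{Fj09}), and your proof of (2) via the minimal resolution is a valid alternative to the paper's route, which instead takes a common resolution of $Y$ and $A$, compares the two expressions for $K_Z$ to get $\Exc(p)\subseteq\Exc(q)$, and factors $q$ through $p$ before applying the same genus-formula ending. But there is a genuine gap in your proof of (3), at the parenthetical ``$\kappa(\widetilde V)=0$ because \dots $K_V\sim_{\Q}0$ in (3).'' This implication is false. A normal projective surface with $K_Z\sim_{\Q}0$ need not have canonical singularities, and its minimal resolution need not have Kodaira dimension $0$: take $Z=(E\times E)/\langle i\rangle$ for $E=\C/\Z[i]$ with $\Z/4$ acting diagonally by multiplication by $i$ (the action is free in codimension one, so $K_Z\sim_{\Q}0$, but the quotient singularities of type $\frac14(1,1)$ are not canonical and $Z$ is a rational surface); or contract an anticanonical elliptic curve of negative self-intersection on a rational surface. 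Your own computation only yields $K_{\widetilde Z}\sim_{\Q}\Delta\le 0$; when $\Delta\neq 0$ this forces $h^0(mK_{\widetilde Z})=0$ for all $m>0$, i.e.\ $\kappa(\widetilde Z)=-\infty$ and $\widetilde Z$ birationally ruled, which is precisely the case your classification into abelian/K3/Enriques/bielliptic does not see. So, contrary to your closing remark, the delicate point is not whether the genus-one curve descends (in the canonical case $\Exc(\mu)$ is a union of $(-2)$-curves, so it does), but the non-canonical case that your reduction silently discards.

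The gap is repairable. If $\Delta\neq 0$ then $\widetilde Z$ is birationally ruled, so $Z$ carries infinitely many rational curves (only finitely many members of the ruling are $\mu$-exceptional), and a rational curve already violates algebraic Lang hyperbolicity: composing the normalization $\PP^1\to C\subset Z$ with a degree-two map $E\to\PP^1$ from an elliptic curve gives a non-constant holomorphic map from an abelian variety. (This observation is implicitly what lets the paper conclude: its proof of (3) avoids the minimal resolution altogether and simply applies part (1) to $Z$ itself --- $Z$ is not of general type since $K_{\widetilde Z}\sim_{\Q}\Delta$ is exceptional, so either $Z$ is birational to an abelian surface, whence $Z\cong A$ by (2), or $Z$ has infinitely many rational or elliptic curves, and either kind of curve kills hyperbolicity.) You should either add the ruled case explicitly with the $E\to\PP^1$ trick, or restructure (3) along the paper's lines.
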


\begin{proof}
(1) It is well known that every Enrique surface has an elliptic fibration.
By \cite[Theorem in Appendix]{MM}, every $K3$ surface has infinitely many
singular elliptic curves.
Thus (1) follows from the classification of algebraic surfaces.

(2) Take a common resolution $Z$ of $Y$ and $A$, i.e., let
$p : Z \to A$ and $q : Z \to Y$ be two biraitonal morphisms.
Write $K_Z = p^*K_A + E_p = E_p$ where $E_p \ge 0$ is $p$-exceptional and
$\Supp E_p$ is equal to $\Exc(p)$, the exceptional locus of $p$.
Write $K_Z = q^*K_{Y} + E_1 - E_2 \sim_{\Q} E_1 - E_2$ where both $E_i \ge 0$ are $q$-exceptional
and there is no common irreducible component of $E_1$ and $E_2$.

Equating the two expressions of $K_Z$, we get
$E_1 \sim_{\Q} E_2 + E_p$. Since $E_1$ is $q$-exceptional, its
Iitaka $D$-dimension is zero, so $E_1 = E_2 + E_p$.
Thus $\Exc(p) = \Supp E_p \subseteq \Supp E_1 \subseteq \Exc(q)$.
Hence there is a birational surjective morphism $h : A \to Y$ such that $q = h \circ p$:

\vskip 2pc
\begin{center}\psset{arrows=->, colsep=2cm, arrowscale=1.5, nodesep=2mm} \everypsbox{\footnotesize}
\begin{psmatrix}
$Z$ & $A$ & $Y$
\ncline[linestyle=dashed]{1,1}{1,2}
\ncline{1,1}{1,2}
\Bput{$p$}
\ncline{1,2}{1,3}
\Bput{$h$}
\ncarc[arcangleA=30,arcangleB=30]{1,1}{1,3}
\Aput{$q$}
\end{psmatrix} .
\end{center}

\vskip 1pc \noindent

If $h : A \to Y$ is not an isomorphism, then it contracts a curve $C$ on $A$ to a point on $Y$.
Clearly, $C^2 < 0$.
By the genus formula, $2g(C) - 2 = C^2 + C . K_A = C^2 < 0$. So $C \cong \PP^1$.
This contradicts the fact that there is no rational curve on the abelian variety $A$.
Thus $h$ is an isomorphism.

(3) Since $K_Z \sim_{\Q} 0$, $Z$ is not of general type.
By (1), either $Z$ is birational to an abelian surface, or $Z$ has infinitely many rational or elliptic curves.
In the first case, $Z$ is an abelian surface by (2). Thus
$Z$ is not algebraic Lang hyperbolic in all cases.

(4) We refer to \cite[\S 3.13]{KM} for its proof or references.

(5) This follows from the base point freeness result for nef and big canonical divisors of klt varieties
(cf.~\cite[Theorem 3.3]{KM}).

(6) It is proven in \cite[Theorem 3.6]{Fj09}.
\end{proof}

The result below is just \cite[Theorem 8.3]{Ka85}; see also
\cite[Lemma 8.1]{Ka85} and \cite[Theorem 1]{Ka81} for the assertion (1).

\begin{lemma}\label{alb}
Let $X$ be a normal projective variety with only canonical singularities and $K_X \sim_{\Q} 0$.
Suppose that the irregularity $q(X) > 0$. Then we have:
\begin{itemize}
\item[(1)]
The albanese map $\alb_X : X \to A := \Alb(X)$ is a surjective morphism,
where $\dim A = q(X)$.
\item[(2)]
There is an \'etale morphism $B \to A$ from another abelian variety $B$
such that the fibre product $X \times_A B \cong Z \times B$ for some variety $Z$.
\item[(3)]
$X$ is covered by images of abelian varieties $\{z\} \times B$ ($z \in Z$).
\end{itemize}
\end{lemma}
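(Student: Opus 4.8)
The plan is to derive all three assertions from the structure theory of the albanese morphism of a minimal variety of Kodaira dimension zero, following Kawamata. Fix a resolution $\sigma \colon \widetilde{X} \to X$, so that $A = \Alb(X) = \Alb(\widetilde{X})$ by definition. Since $X$ has canonical, hence rational, singularities, $R^i\sigma_*\OO_{\widetilde{X}} = 0$ for $i > 0$, and the Leray spectral sequence gives $q(X) = h^1(X,\OO_X) = h^1(\widetilde{X},\OO_{\widetilde{X}}) = \dim A$; this is the dimension count in (1). To see that the rational map $\alb_X$ is in fact a morphism, I would argue exactly as in Remark \ref{rThA}(4): every fibre of $\sigma$ is rationally chain connected by \cite[Corollary 1.5]{HM}, so the composite $\alb_{\widetilde{X}} \colon \widetilde{X} \to A$ contracts each $\sigma$-fibre to a point (the abelian variety $A$ carries no rational curve) and therefore descends to a morphism $\alb_X \colon X \to A$. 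For surjectivity (with connected general fibres), note that $K_X \sim_{\Q} 0$ forces $\kappa(X) = 0$, so $X$ is non-uniruled, and I would invoke Kawamata's characterization \cite[Theorem 1]{Ka81} that the albanese map of a variety of Kodaira dimension zero is surjective and is an algebraic fibre space.

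The heart of the matter is (2), which is Kawamata's structure theorem \cite[Theorem 8.3]{Ka85}; I would recall its mechanism rather than reprove it. First, the general fibre $F$ of $\alb_X$ satisfies $K_F \sim_{\Q} 0$: since $K_A \sim 0$ we have $K_{X/A} = K_X - \alb_X^* K_A \sim_{\Q} K_X \sim_{\Q} 0$, and restricting to a general fibre gives $K_F \sim_{\Q} 0$ by adjunction, so the fibration is relatively of Kodaira dimension zero. Next, the semipositivity theorems for direct images of relative pluricanonical sheaves (Fujita, Kawamata, Viehweg) applied to $\alb_{X*}\OO_X(mK_{X/A})$, combined with $K_{X/A} \sim_{\Q} 0$, show that these sheaves are numerically flat on the abelian variety $A$, whence the associated variation of Hodge structure has finite monodromy. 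Passing to an \'etale cover $B \to A$ that kills this monodromy, one shows the fibration becomes isotrivial and in fact splits as a product $X \times_A B \cong Z \times B$. I expect this final trivialization to be the genuine obstacle: it is the point at which global Hodge-theoretic input, namely finiteness of the monodromy and rigidity of the period map, is indispensable, and no formal manipulation will replace it.

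Finally, (3) is immediate from (2). The second projection $\mathrm{pr} \colon Z \times B \to B$ has fibres $\{z\} \times B \cong B$, and composing the isomorphism $X \times_A B \cong Z \times B$ with the first projection $X \times_A B \to X$ sends each such $\{z\} \times B$ to the image of an abelian variety in $X$. Since $B \to A$ is \'etale and hence finite surjective, and $\alb_X$ is surjective by (1), the base-change morphism $X \times_A B \to X$ is surjective; therefore these images cover $X$, which gives (3).
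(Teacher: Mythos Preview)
Your proposal is correct and aligns with the paper's approach: the paper does not supply an independent proof but simply records that the lemma is \cite[Theorem 8.3]{Ka85}, with \cite[Lemma 8.1]{Ka85} and \cite[Theorem 1]{Ka81} covering assertion (1), exactly the references you invoke. Your exposition of the mechanism behind (2) and the derivation of (3) go beyond what the paper states, but they are accurate.
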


\begin{lemma}\label{pe}
Let $X$ be a normal projective variety of dimension $n$ such that $K_X$ is $\Q$-Cartier.
Suppose that $X$ is not uniruled and $K_X \equiv 0$ (numerically).
Then $X$ has at worst canonical singularities and $K_X \sim_{\Q} 0$.
\end{lemma}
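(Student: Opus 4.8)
The plan is to pass to a resolution of singularities and read off the discrepancies, exploiting non-uniruledness through the pseudoeffectivity of the canonical class. Fix a resolution $\pi \colon \widetilde{X} \to X$ and write $K_{\widetilde{X}} = \pi^* K_X + \sum_i a_i E_i$, where the $E_i$ are the $\pi$-exceptional prime divisors and the $a_i = a(E_i, X)$ are the discrepancies; proving that $X$ is canonical amounts to showing $a_i \ge 0$ for every $i$. Since uniruledness is a birational invariant, the smooth projective variety $\widetilde{X}$ is again not uniruled, so by the theorem of Boucksom, Demailly, Paun and Peternell its canonical divisor $K_{\widetilde{X}}$ is pseudoeffective. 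Because $K_X \equiv 0$ we have $\pi^* K_X \equiv 0$, hence $\sum_i a_i E_i \equiv K_{\widetilde{X}}$ and the $\pi$-exceptional divisor $\sum_i a_i E_i$ is pseudoeffective.

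The technical heart is then to show that a pseudoeffective $\pi$-exceptional divisor is effective. I would establish this by restriction to a general surface: choosing $A$ very ample on $X$ and cutting $\widetilde{X}$ by $n-2$ general members of a very ample system, I obtain a smooth surface $S$ whose image $\bar{S} = \pi(S)$ is a normal surface with $\pi|_S \colon S \to \bar{S}$ birational. Each $E_i$ then meets $S$ in a nonempty curve, and these curves are exactly the $\pi|_S$-exceptional curves, whose intersection matrix is negative definite by the Hodge index theorem. The restriction $\left( \sum_i a_i E_i \right)|_S$ is pseudoeffective and supported on this negative-definite configuration; its Zariski positive part is therefore a nef class contained in a negative-definite span, so it is numerically trivial and $\left( \sum_i a_i E_i \right)|_S$ is effective. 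As each $E_i$ meets $S$, this forces all $a_i \ge 0$, so $X$ has at worst canonical singularities.

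For the second assertion, $K_X$ is now nef (being canonical and numerically trivial) and has numerical dimension zero. By the abundance theorem in the numerical-dimension-zero case (Nakayama) this yields $\kappa(X) = 0$, so that $m K_X \sim D$ for some positive integer $m$ and some effective divisor $D$. Since $D \equiv m K_X \equiv 0$ is effective and numerically trivial, intersecting with $A^{n-1}$ gives $D \cdot A^{n-1} = 0$ and hence $D = 0$; thus $m K_X \sim 0$ and $K_X \sim_{\Q} 0$.

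I expect the main obstacle to be the effectivity step for the pseudoeffective exceptional divisor. The negativity lemma in its usual form (cf.~\cite[Lemma 3.39]{KM}) requires $-\sum_i a_i E_i$ to be $\pi$-nef, which is not available here, so one must instead feed pseudoeffectivity into the Hodge-index argument on a general complete-intersection surface. One should take care to verify that the general $S$ can indeed be chosen with $\bar{S}$ normal and $\pi|_S$ birational onto it with the stated exceptional locus; the two deeper inputs, the theorem of Boucksom--Demailly--Paun--Peternell and Nakayama's numerical-dimension-zero abundance, may be invoked as black boxes.
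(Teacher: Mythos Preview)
Your overall strategy coincides with the paper's: pass to a resolution, invoke BDPP so that $K_{\widetilde X}\equiv\sum_i a_iE_i$ is pseudoeffective, and then argue that a pseudoeffective $\pi$-exceptional divisor is effective. For the second assertion the paper simply quotes Kawamata \cite[Theorem~8.2]{Ka85} in place of your Nakayama $\nu=0$ abundance; these are interchangeable here.

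The genuine gap is exactly the step you single out. Your surface--restriction argument does not go through in either reading. If you cut $\widetilde X$ by $n-2$ general members of a very ample system \emph{on $\widetilde X$}, the curves $C_i=E_i|_S$ need not be contracted by $\pi|_S$: when $\dim\pi(E_i)=n-2$, the image $\pi(C_i)\subset\pi(E_i)\cap\bar S$ can very well be a curve (e.g.\ the blow-up of $\PP^4$ along a plane), so the intersection matrix of the $C_i$ is not negative definite and the Hodge-index step fails. If instead you cut by general members of $|\pi^*A|$ with $A$ ample on $X$, then $\bar S$ is a normal complete-intersection surface and $\pi|_S$ behaves well, but any $E_i$ with $\dim\pi(E_i)<n-2$ satisfies $E_i|_S=0$ for general $S$, so you learn nothing about that $a_i$. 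No single surface sees all the exceptional divisors with the required negativity.

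The paper circumvents this by staying on $\widetilde X$ and using Nakayama's $\sigma$-decomposition \cite[Ch.~III]{ZDA}. Writing $\sum a_iE_i=E_1-E_2$ with $E_1,E_2\ge 0$ exceptional and without common components, and $K_{\widetilde X}=P_1+N_1$ the $\sigma$-decomposition, one has $E_1\equiv P_1+(N_1+E_2)$. Since an effective exceptional divisor is its own negative part, $E_1=N_\sigma(E_1)\le N_1+E_2$; thus $N_1+E_2-E_1\ge 0$ is effective and numerically $-P_1$, and intersecting with $H^{n-1}$ forces it to vanish, whence $E_2=0$. This is precisely the ``pseudoeffective exceptional $\Rightarrow$ effective'' lemma you need, proved without cutting to a surface; it is the natural replacement for your Hodge-index step.
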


\begin{proof}
Let $\gamma : \widetilde{X} \to X$ be a projective resolution
and write $K_{\widetilde X} = \gamma^*K_{X} + (E_1 - E_2) \equiv E_1 - E_2$
such that $E_i \ge 0$ ($i = 1, 2$) are $\gamma$-exceptional and have no common components.
Since $X$ and hence $\widetilde{X}$ are non-uniruled,
$K_{\widetilde X}$ is pseudo-effective by \cite[Theorem 2.6]{BDPP}.
Let $K_{\widetilde X} = P_1 + N_1$ be the $\sigma$-decomposition in
\cite[ChIII, \S 1.b]{ZDA}, which is also called the Zariski decomposition in codimension-one.
Here $P_1$ is the movable part and
$N_1$ the negative part which is an effective divisor.
Then $E_1 \equiv P_1 + (N_1 + E_2)$.
Since RHS $- (N_1 + E_2)$ is movable, the negative part of LHS which is $E_1$,
satisfies $E_1 \le N_1 + E_2$ (cf.~\cite[ChIII, Proposition 1.14]{ZDA}).
Thus $(N_1 + E_2 - E_1)$ and also $P_1$ are pseudo-effective divisors, but their sum
is numerically equivalent to zero.
Take general members $H_i$ ($1 \le i \le n-1$) in a linear system $|H|$ with $H$ a very ample divisor
on $\widetilde{X}$.
Then
$$0 = H^{n-1} . (P_1 + N_1 + E_2 - E_1) = H^{n-1} . P_1 + H^{n-1} . (N_1 + E_2 - E_1) .$$
Hence $H^{n-1} . P_1 = 0 = H^{n-1} . (N_1 + E_2 - E_1)$. Thus
$0 = (N_1 + E_2 - E_1) \cap (H_1 \cap \cdots \cap H_{n-1})$.
Since $N_1 + E_2 - E_1$ is an effective divisor and the restriction to a subvariety of an ample divisor
is still an ample divisor, we get $N_1 + E_2 - E_1 = 0$.
Thus $N_1 + E_2 = E_1$. Since $E_i$ have no common components,
either $E_2 = 0$, or $E_1 = 0$ (and hence $N_1 = E_2 = 0$).
So $E_2 = 0$ and hence $K_{\widetilde X} = \gamma^*K_{X} + E_1$
with $E_1 \ge 0$. Therefore, $X$ has at worst canonical singularities by definition.
This together with $K_{X} \equiv 0$ imply that $K_{X} \sim_{\Q} 0$ by
\cite[Theorem 8.2]{Ka85}.
\end{proof}

\begin{lemma}\label{rnef}
Let $W$ be an algebraic Lang hyperbolic projective variety, $V$ a projective variety with at worst klt
singularities, and $h: V \to W$ a morphism such that $V \to h(V)$ is generically finite.
Assume that $K_V$ is relatively nef over $W$. Then $K_V$ is nef.
\end{lemma}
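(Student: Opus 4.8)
The plan is to argue by contradiction, combining the Cone Theorem for klt varieties with the fact that a hyperbolic variety carries no rational curve.

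First I would record the elementary observation, already used implicitly in Remark \ref{rThA}(4), that the hyperbolic variety $W$ contains no rational curve. Indeed, if $g : \PP^1 \to W$ were non-constant, then precomposing with any non-constant morphism $E \to \PP^1$ from an elliptic curve $E$ (an abelian variety) would produce a non-constant holomorphic map from an abelian variety to $W$, contradicting the algebraic Lang hyperbolicity of $W$.

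Next, suppose for contradiction that $K_V$ is \emph{not} nef. Since $V$ is projective with at worst klt singularities, the Cone Theorem (\cite[Theorem 3.7]{KM}) yields a $K_V$-negative extremal ray of $\NE(V)$ spanned by (the class of) a rational curve $\ell \subseteq V$ with $K_V . \ell < 0$; write $\ell = \nu(\PP^1)$ for a non-constant morphism $\nu : \PP^1 \to V$. Because $K_V$ is relatively nef over $W$, every curve contracted by $h$ meets $K_V$ non-negatively; as $K_V . \ell < 0$, the curve $\ell$ is therefore \emph{not} contracted by $h$. Hence $h \circ \nu : \PP^1 \to W$ is non-constant and its image $h(\ell)$ is a rational curve in $W$, contradicting the first step. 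Thus $K_V$ is nef.

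The only non-formal input is the Cone Theorem step, and this is exactly where the klt hypothesis on $V$ is essential: it guarantees that the $K_V$-negative part of the cone is generated by genuine rational curves, so that the contracted-or-rational dichotomy can be applied. The remaining implications — that relative nefness pushes $\ell$ off the $h$-contracted locus, and that the non-contracted image of $\PP^1$ is again a rational curve — are immediate and do not even require the generic finiteness of $V \to h(V)$.
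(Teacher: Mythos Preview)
Your argument is correct and follows essentially the same route as the paper's proof: both assume $K_V$ is not nef, invoke the Cone Theorem \cite[Theorem 3.7]{KM} to produce a $K_V$-negative extremal rational curve, and then play off relative nefness over $W$ against the absence of rational curves on the hyperbolic $W$ to reach a contradiction. Your version just reverses the final contrapositive (you push the curve into $W$ rather than arguing it must be $h$-contracted) and spells out explicitly why $W$ carries no rational curve, but the substance is identical.
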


\begin{proof}
Suppose the contrary that $K_V$ is not nef and hence
there is a $K_V$-negative extremal rational curve $C$
by the cone theorem \cite[Theorem 3.7]{KM}. Since $W$ is hyperbolic
and hence contains no rational curve,
$C$ must be contracted by $V \to W$. So $K_V . C < 0$ for
a curve $C \subset V$ contracted by $V \to W$.
This contradicts the relative nefness of $K_V$ over $W$.
Hence $K_V$ is nef. This proves the lemma.
\end{proof}

\begin{lemma}\label{genfin}
Let $W$ be an algebraic Lang hyperbolic projective variety, $X$ a projective variety and
$g: X \to W$ a morphism such that $X \to g(X)$ is generically finite.

Then there is a birational map $X \dasharrow X_m$ over $W$,
i.e., there is a (generically finite) morphism $g_m : X_m \to W$ such that the natural composition
$X \dasharrow X_m \overset{g_m}\to W$ coincides with $g: X \to W$ $($and hence $g_m(X_m) = g(X))$:

\vskip 2pc
\begin{center}\psset{arrows=->, colsep=2cm, arrowscale=1.5, nodesep=1mm} \everypsbox{\footnotesize}
\begin{psmatrix}
$X$ & $X_m$ & $W$
\ncline[linestyle=dashed]{1,1}{1,2}
\ncline{1,2}{1,3}
\Bput{$g_m$}
\ncarc[arcangleA=30,arcangleB=30]{1,1}{1,3}
\Aput{$g$}
\end{psmatrix} .
\end{center}

\vskip 1pc \noindent
Further, $X_m$ has at worst canonical singularities; the canonical divisor
$K_{X_m}$ is nef; and $K_{X_m}$ is also relatively ample over $W$.
\end{lemma}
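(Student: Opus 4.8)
The plan is to construct $X_m$ as the relative canonical model of a resolution of $X$ over $W$, and then to promote the relative nefness of its canonical divisor to absolute nefness by means of Lemma \ref{rnef}. The only serious input is the finite generation theorem \cite[Theorem 1.2]{BCHM}, which applies unconditionally precisely because $g$ is generically finite.

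First I would fix a projective resolution $\sigma : \widetilde{X} \to X$ and set $\widetilde{g} := g \circ \sigma : \widetilde{X} \to W$. Writing $W' := g(X)$ for the image, the induced map $\widetilde{X} \to W'$ is still generically finite, being the composite of the birational morphism $\sigma$ with the generically finite $X \to W'$. Since $\widetilde{X}$ is smooth, the pair $(\widetilde{X}, 0)$ is terminal, hence klt; and since $\widetilde{g}$ is generically finite its generic fibre over $W'$ is zero-dimensional, so every line bundle, in particular $K_{\widetilde{X}}$, is big over $W'$, equivalently over $W$. This relative bigness is exactly the situation to which \cite[Theorem 1.2]{BCHM} applies: the relative canonical algebra $\bigoplus_{m \ge 0} \widetilde{g}_* \OO_{\widetilde{X}}(m K_{\widetilde{X}})$ is a finitely generated $\OO_W$-algebra. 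I would then let $X_m$ be its relative $\SProj$ over $W$, with the resulting projective structure morphism $g_m : X_m \to W$. By construction the rational map $\widetilde{X} \dasharrow X_m$ is birational over $W$ (the relative canonical map is birational onto its image because $K_{\widetilde{X}}$ is relatively big), so composing with $\sigma^{-1}$ produces the required birational map $X \dasharrow X_m$ over $W$; and since everything is constructed over $W$, the composite $X \dasharrow X_m \overset{g_m}\to W$ equals $g$. Moreover $g_m$ is generically finite, $K_{X_m}$ is relatively ample over $W$ by the definition of the relative canonical model, and $X_m$ has at worst canonical singularities, being the canonical model of the terminal variety $\widetilde{X}$.

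It then remains only to see that $K_{X_m}$ is nef in the absolute sense, and here I would apply Lemma \ref{rnef} with $V = X_m$ and $h = g_m$: the variety $X_m$ is klt (canonical singularities are klt), the map $X_m \to g_m(X_m) = W'$ is generically finite, and $K_{X_m}$, being relatively ample, is a fortiori relatively nef over $W$. Lemma \ref{rnef} then yields the nefness of $K_{X_m}$. Note that the hyperbolicity of $W$ enters the argument only at this last step, through the absence of rational curves on $W$ built into Lemma \ref{rnef}. The conceptual crux of the proof, and the point I expect to require the most care to state correctly, is the first one: that generic finiteness of $g$ forces $K_{\widetilde{X}}$ to be \emph{relatively} big over $W$, which is precisely what allows one to invoke \cite{BCHM} without assuming any unproven instance of the minimal model program, consistently with the comments in \ref{setup1.1}. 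As a sanity check, non-uniruledness of $\widetilde{X}$, equivalently pseudo-effectivity of $K_{\widetilde{X}}$ by \cite[Theorem 2.6]{BDPP}, is automatic here, since a rational curve on $\widetilde{X}$ not contracted by the generically finite $\widetilde{g}$ would map to a rational curve on $W'$, which hyperbolicity forbids.
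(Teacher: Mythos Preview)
Your proposal is correct and follows essentially the same line as the paper: take a resolution, observe that generic finiteness makes $K_{\widetilde{X}}$ relatively big over $W$, invoke \cite[Theorem 1.2]{BCHM} to obtain the relative canonical model $X_m$ with canonical singularities and $K_{X_m}$ relatively ample, and then apply Lemma \ref{rnef} to upgrade relative nefness to nefness. The only cosmetic difference is that the paper first replaces $W$ by $g(X)$ and phrases the output of \cite{BCHM} directly as the log canonical model, whereas you spell out the relative $\SProj$ construction; these are equivalent.
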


\begin{proof}
Since $g(X)$ is also hyperbolic,
replacing $W$ by $g(X)$, we may assume that $g$ is surjective (and generically finite).
Take a projective resolution $X' \to X$.
Since the relative dimension of $X'$ over $W$ is zero, the canonical divisor
$K_{X'}$ (and indeed every divisor on $X'$) is relative big over $W$.
The main Theorem 1.2 in \cite{BCHM} says that
$X'$ has a log canonical model $X_m$ over $W$, so
$X_m$ has at worst canonical singularities and $K_{X_m}$ is relative ample over $W$.
This $X_m$ is obtained from a log terminal model of $X'$ over $W$ followed by a
birational morphism over $W$ using the relative-base point freeness result for
relative nef and big divisors; see \cite[Theorem 1.2, Definition 3.6.7, Theorem 3.9.1]{BCHM}.
We note that \cite{BCHM} considers log pairs, while ours is the pure case;
so the smoothness of $X'$ implies that the log terminal (resp. log canonical) model of $X'$
over $W$ has at worst terminal (resp. canonical) singularities.
By Lemma \ref{rnef},
$K_{X_m}$ is nef. This proves the lemma.
\end{proof}

\begin{remark}\label{rgenfin}
\begin{itemize}
\item[(1)]
By the proof, every subvariety $S$ of $X$ (with $g_{| S}$ generically finite)
or of hyperbolic $W$ has a minimal model $S_m$ with only canonical singularities.

\item[(2)]
Assume $n(K_{X_m}) = \dim X_m \ge 1$ and Conjecture \ref{Abund} (2) holds.
Then the Kodaira dimension $\kappa(X_m) > 0$.
By \cite[Theorem 7.3]{Ka85b}, $K_{X_m}$ is ``good'' (or abundant).
So it is semi-ample by \cite[Theorem 1.1]{Ka85b}, which has a new proof by Fujino.

\item[(3)]
Suppose that $Y$ is a normal projective variety birational to the $X$ in Lemma \ref{genfin}
and $K_Y$ is $\Q$-Cartier. Then $K_Y$ is pseudo-effective.
Indeed, let $\sigma: Y' \to Y$ be a resolution. Since $g : X \to W$ is generically
finite and $W$ is hyperbolic, $X$ and hence $Y$ and $Y'$ are non-uniruled.
By \cite[Theorem 2.6]{BDPP}, $K_{Y'}$ is pseudo-effective.
Hence $K_Y = \sigma_*K_{Y'}$ is pseudo-effective.
\end{itemize}
\end{remark}

\section{Proof of Theorems}

In this section, we prove results in Introduction, and Theorems \ref{ThB} and \ref{ThD} which
imply Theorem \ref{ThC} and Proposition \ref{PropA}, respectively.
We also prove Corollaries \ref{Cor1} and \ref{Cor2}, all in dimension $\le 3$,
where we do not assume
Conjecture \ref{nALH} or \ref{Abund}.

When $\dim X \le 3$, Case (3) below does not occur.

\begin{theorem}\label{ThB}
Let $W$ be an algebraic Lang hyperbolic projective variety, $X$ a projective variety of dimension $n$
and $g: X \to W$ a morphism such that $X \to g(X)$ is generically finite.
Assume either $n \le 3$ or Conjecture $\ref{Abund} (2)$
$($resp.~either $n \le 3$ or Conjecture $\ref{Abund} (2)$ without the extra Hyp(A)$)$ holds
for all varieties birational to $X$.

Then there is a birational map $X \dasharrow X_m$ over $W$, i.e.,
there is a morphism $g_m : X_m \to W$ such that the composition
$X \dasharrow X_m \overset{g_m}\to W$ coincides with $g: X \to W$
$($and hence $g_m(X_m) = g(X) )$:

\vskip 2pc
\begin{center}\psset{arrows=->, colsep=2cm, arrowscale=1.5, nodesep=1mm} \everypsbox{\footnotesize}
\begin{psmatrix}
$X$ & $X_m$ & $W$
\ncline[linestyle=dashed]{1,1}{1,2}
\ncline{1,2}{1,3}
\Bput{$g_m$}
\ncarc[arcangleA=30,arcangleB=30]{1,1}{1,3}
\Aput{$g$}
\end{psmatrix} .
\end{center}

\vskip 1pc \noindent
Further, $X_m$ is a minimal variety with at worst canonical
singularities; $K_{X_m}$ is relatively ample over $W$; and
one of the following is true.
\begin{itemize}
\item[(1)]
$K_{X_m}$ is ample. Hence both $X_m$ and $X$ are of general type.
\item[(2)]
$X_m$ is an absolutely minimal
%{\rm (in the sense of \ref{rThA})}
Calabi-Yau variety of dimension $n \ge 3$,
and $g_m : X_m \to g_m(X_m) = g(X) \subseteq W$ is a finite morphism.
\item[(3)]
There is an almost holomorphic map $\tau : X_m \dasharrow Y$
$($resp.~a holomorphic map $\tau : X_m \to Y )$
such that its general fibre $F$ is an absolutely minimal
Calabi-Yau variety with $3 \le \dim F < \dim X_m$,
and $(g_m)_{| F} : F \to g_m(F) \subset g_m(X_m) = g(X) \subseteq W$ is a finite morphism.
The Kodaira dimension $\kappa(X) \le \dim Y \le n-3$.
\end{itemize}
\end{theorem}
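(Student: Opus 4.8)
The plan is to run the relative MMP first and then read off the trichotomy from the nef reduction of the resulting minimal model. First I would apply Lemma \ref{genfin} to $g : X \to W$: it produces the morphism $g_m : X_m \to W$ with $X \dasharrow X_m$ birational over $W$, where $X_m$ is minimal with at worst canonical singularities and $K_{X_m}$ is nef and relatively ample over $W$. This already yields the common part of the statement, and everything else is a case analysis governed by the nef dimension $n(K_{X_m})$. Applying Theorem \ref{Th8aut} to $L = K_{X_m}$, I obtain the almost holomorphic nef reduction $f : X_m \dasharrow Z$ with $\dim Z = n(K_{X_m})$, and I study a general fibre $F$ (which I may take to be normal and $\Q$-Gorenstein, with $K_F = K_{X_m}|_F$ by adjunction on a general fibre).

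The technical heart is to show that every positive-dimensional general fibre $F$ is an absolutely minimal Calabi-Yau variety of dimension $\ge 3$. By property (1) of Theorem \ref{Th8aut}, $K_{X_m}|_F \equiv 0$; since $K_{X_m}$ is relatively ample over $W$, its restriction $K_F = K_{X_m}|_F$ is relatively ample over $W$ via $(g_m)|_F$, and a divisor that is simultaneously numerically trivial and relatively ample forces $(g_m)|_F : F \to W$ to be finite. Finiteness over the hyperbolic $W$ shows $F$ contains no rational curve, hence is non-uniruled; then Lemma \ref{pe} gives that $F$ has canonical singularities and $K_F \sim_{\Q} 0$. To upgrade this to Calabi-Yau I must rule out $q(F) > 0$: if $q(F) > 0$, Lemma \ref{alb} covers $F$ by images of abelian varieties, and composing with the finite $(g_m)|_F$ produces a non-constant map from an abelian variety into $W$, contradicting hyperbolicity. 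Finally $F$ is itself hyperbolic (being finite over the hyperbolic $g(X) \subseteq W$), hence absolutely minimal by Remark \ref{rThA}(4), and Proposition \ref{solconj}(3) forbids a hyperbolic Calabi-Yau of dimension $\le 2$, so $\dim F \ge 3$.

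With this in hand the trichotomy is dictated by $d := n(K_{X_m})$. If $d = 0$ then $K_{X_m} \equiv 0$ and $F = X_m$, giving case (2) with $g_m$ finite. If $0 < d < n$ then $f$ is the desired map $\tau$, with general fibre of dimension $n - d \ge 3$, giving case (3); the bound follows from $\kappa(X) = \kappa(X_m) \le n(K_{X_m}) = \dim Y$ together with $\dim Y = n - \dim F \le n - 3$. If $d = n$, i.e.\ Hyp(A) holds, I invoke Conjecture \ref{Abund}(2) (or Proposition \ref{solconj}(4) when $n \le 3$) as in Remark \ref{rgenfin}(2) to conclude that $K_{X_m}$ is semi-ample; since its nef dimension is $n$, the induced morphism is birational and $K_{X_m}$ is nef and big, landing in case (1). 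For the parenthetical versions I would instead use Conjecture \ref{Abund}(2) without Hyp(A): whenever $K_{X_m} \not\equiv 0$ it yields semi-ampleness directly, so $\tau$ is the holomorphic Iitaka morphism and the same fibre analysis applies, making $\tau$ in case (3) an honest morphism.

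The step I expect to be the main obstacle is proving genuine ampleness of $K_{X_m}$ in case (1), rather than merely bigness and semi-ampleness, while retaining the morphism $g_m : X_m \to W$. My plan is to argue by contradiction using hyperbolicity: if the canonical model map $\psi : X_m \to X_c$, which contracts exactly the $K_{X_m}$-trivial curves, were not an isomorphism, then a positive-dimensional fibre of $\psi$ would be rationally chain connected by \cite[Corollary 1.5]{HM}, hence would contain a rational curve $C$ with $K_{X_m} . C = 0$; relative ampleness of $K_{X_m}$ over $W$ then forces $C$ to map finitely onto a rational curve in $W$, contradicting the hyperbolicity of $W$. Thus $\psi$ is an isomorphism and $K_{X_m}$ is ample. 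The remaining routine points, namely the normality of the general fibre, the adjunction identity $K_F = K_{X_m}|_F$, and the standard inequality $\kappa \le n(\cdot)$ for nef divisors, I would treat as known.
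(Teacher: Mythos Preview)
Your proposal is correct and follows essentially the same route as the paper: construct $X_m$ via Lemma~\ref{genfin}, take the nef reduction of $K_{X_m}$, and split into cases according to the nef dimension, using Lemma~\ref{pe} and Lemma~\ref{alb} to get the Calabi-Yau property of the general fibre, Remark~\ref{rThA}(4) and Proposition~\ref{solconj}(3) for absolute minimality and $\dim F \ge 3$, and the Hacon--McKernan rational connectedness result together with relative ampleness to force $K_{X_m}$ ample in the top case. Two very minor remarks: your bound $\kappa(X) \le n(K_{X_m})$ is fine (it follows from easy addition applied to the nef reduction, just as the paper's $\kappa(X) \le \kappa(F) + \dim Y$ does), and in the parenthetical version Conjecture~\ref{Abund}(2) only gives $\kappa(X_m) > 0$, not semi-ampleness ``directly''; you still pass through Kawamata's results as in Remark~\ref{rgenfin}(2), which you cite earlier anyway.
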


In Theorem \ref{ThD} below, Conjecture \ref{nALH} or \ref{Abund} is not assumed.
$\aaa(W)$ denotes (the Zariski-closure of) the image
$\Imm (\alb_W : W \dasharrow \Alb(W))$
of the albanese map.
Since $\Alb(W)$ is generated by $\aaa(W)$, and $\dim \Alb(W) = q(\widetilde{W}) =
\frac{1}{2}b_1(\widetilde{W})$
for any projective resolution $\widetilde{W}$ of $W$,
the condition (iii) in Theorem \ref{ThD} is satisfied if $n = 4$ and
$q(\widetilde{W}) > 0$.

For related work, the authors of \cite{HLW} also considered albanese map for smooth $W$
and used classical results of Ueno,
\cite[Theorem 1]{Ka81}, etc.,
while we use
%\cite[Theorem 13]{Ka81},
\cite{Fj09}, \cite{Ka85}, \cite{Ka85b}.

\begin{theorem}\label{ThD}
Let $X$ be an algebraic Lang hyperbolic projective variety of dimension $n$.
Assume one of the following conditions holds.
\begin{itemize}
\item[(i)]
$X$ has maximal albanese dimension, i.e., $\dim \aaa(X) = \dim X$.
\item[(ii)]
The Kodaira dimension $\kappa(X) \ge n-3$.
\item[(iii)]
$\dim \aaa(X) \ge n-3$ and $\kappa(\aaa(X)) \ge n-4$.
\end{itemize}
Then one of the following is true.
\begin{itemize}
\item[(1)]
There is a birational surjective morphism $g_m : X_m \to X$ such that
$X_m$ has at worst canonical singularities, $K_{X_m}$ is ample and
hence both $X_m$ and $X$ are of general type.
\item[(2)]
$\kappa(X) \in \{n-3, n-4\}$, and
$X$ is covered by subvarieties whose normalizations
are absolutely minimal Calabi-Yau varieties of dimension three.
\end{itemize}
\end{theorem}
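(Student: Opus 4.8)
The plan is to reduce Theorem \ref{ThD} to the already-proven Theorem \ref{ThB}, applied with $W = X$ and $g = \id_X$, so that the trichotomy of Theorem \ref{ThB} becomes the starting point. The key point is that the three hypotheses (i), (ii), (iii) are each designed to force the dimension-count in Theorem \ref{ThB} Case (3) — where the fibre $F$ of $\tau : X_m \dasharrow Y$ is an absolutely minimal Calabi–Yau variety of dimension $3 \le \dim F < \dim X_m$, and $\dim Y$ satisfies $\kappa(X) \le \dim Y \le n-3$ — to collapse to the single possibility $\dim F = 3$. Under any of (i), (ii), (iii) we will show that Case (2) of Theorem \ref{ThB} cannot occur and that in Case (3) the fibre dimension is exactly three, after which Case (1) of Theorem \ref{ThB} gives alternative (1) here and Case (3) gives alternative (2) here via $(g_m)_{|F}$ being the normalization (since $g = \id_X$).

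First I would invoke Theorem \ref{ThB} with $W = X$: since $\id_X$ is trivially generically finite onto its image, and since either $n \le 3$ or Conjecture \ref{Abund} (2) (without Hyp(A)) is assumed — here note that each of conditions (i), (ii), (iii) is to be combined with the relevant instance of \ref{Abund} (2) through Proposition \ref{solconj} (4), (5), (6) — we obtain $g_m : X_m \to X$ with $X_m$ minimal canonical and one of the three cases. In Case (1) of Theorem \ref{ThB} we are immediately in alternative (1). So it remains to handle Cases (2) and (3), and the heart of the argument is to rule out $\dim F > 3$ in Case (3) and to rule out Case (2) entirely. Under hypothesis (ii), $\kappa(X) \ge n-3$ combined with $\dim Y \le n-3$ and $\dim F = \dim X_m - \dim Y = n - \dim Y$ forces $\dim Y \in \{n-3\}$ together with the chain $\kappa(X) \le \dim Y \le n-3 \le \kappa(X)$, giving $\dim Y = n-3$ and $\dim F = 3$; in Case (2) of Theorem \ref{ThB} we would have $X_m$ Calabi–Yau with $\kappa(X) = \kappa(X_m) = 0$, contradicting $\kappa(X) \ge n-3 \ge 1$ (as $n \ge 4$ in the only subtle range), so Case (2) is excluded.

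The subtler inputs are hypotheses (i) and (iii), which control $\kappa(X)$ indirectly through the albanese map; here I would use that $\dim \aaa(X)$ bounds things via the Iitaka-type inequality and the structure of the albanese image. For (i), maximal albanese dimension means $\dim \aaa(X) = n$; one shows that a general fibre $F$ of $\tau$ in Case (3), being a Calabi–Yau variety with $K_F \sim_{\Q} 0$, maps to $\aaa(X)$ in a way constrained by Lemma \ref{alb} — an absolutely minimal Calabi–Yau with positive irregularity would be covered by images of abelian varieties, violating hyperbolicity of $X$, forcing $q(F) = 0$ and pinning $\dim F$ down — and the maximal albanese dimension then propagates to force $\dim F = 3$ and to exclude Case (2). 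For (iii), I would combine $\dim \aaa(X) \ge n-3$ and $\kappa(\aaa(X)) \ge n-4$ with the $C_{n,m}$-type estimate $\kappa(X) \ge \kappa(\aaa(X))$ (the albanese image being of general type modulo its own fibration) to deduce $\kappa(X) \ge n-4$, and then run the same dimension collapse allowing $\dim Y \in \{n-4, n-3\}$, i.e. $\dim F \in \{3, 4\}$, and finally rule out $\dim F = 4$ using the albanese-dimension bound exactly as in case (ii)/(i). The \emph{main obstacle} I anticipate is precisely this last exclusion of $\dim F = 4$ under (iii): one must show that a four-dimensional Calabi–Yau fibre is incompatible with the simultaneous constraints $\dim \aaa(X) \ge n-3$ and $\kappa(\aaa(X)) \ge n-4$, which requires carefully tracking how the albanese map of $X$ restricts to the fibre $F$ and to the base $Y$, and invoking the additivity/semipositivity results of \cite{Ka85}, \cite{Ka85b} together with the hyperbolicity-forced vanishing $q(F) = 0$ from Lemma \ref{alb}.
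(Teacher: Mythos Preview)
Your plan has a genuine gap at its foundation: you apply Theorem \ref{ThB} (equivalently \ref{ThC}) to $X$ itself under each of (i), (ii), (iii), but Theorem \ref{ThB} requires either $n \le 3$ or Conjecture \ref{Abund}(2) for varieties birational to $X$. The whole point of Theorem \ref{ThD} is that it does \emph{not} assume Conjecture \ref{Abund} (see the sentence preceding the statement). Your appeal to Proposition \ref{solconj} (4), (5), (6) only works for (i): maximal albanese dimension does give abundance for $X_m$ via \ref{solconj}(6). But $\kappa(X) \ge n-3$ in (ii) is neither ``general type'' nor ``dimension $\le 3$'', so neither \ref{solconj}(4) nor (5) applies to $X$; likewise for (iii). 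Without abundance for $X$ you cannot invoke Theorem \ref{ThB} on $X$, so the trichotomy you start from is not available in cases (ii) and (iii).

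The paper's fix is to apply Theorem \ref{ThC} not to $X$ but to suitable low-dimensional \emph{fibres}. Under (ii) one takes the Iitaka fibration $I_X : X \dasharrow Y$; its very general fibre $F$ has $\kappa(F) = 0$ and $\dim F = n - \kappa(X) \le 3$, so abundance holds for $F$ by \ref{solconj}(4), and Theorem \ref{ThC} applied to the hyperbolic $F$ forces Case \ref{ThC}(2): the normalization of $F$ is an absolutely minimal Calabi--Yau threefold. Under (iii) one may assume (i) and (ii) fail, then takes a component $G$ of a general fibre of $\alb_X$; the hypothesis $\dim \aaa(X) \ge n-3$ gives $\dim G \le 3$, so again abundance is known for $G$, and Iitaka's $C_{n,m}$ from \cite{Ka85} together with $\kappa(\aaa(X)) \ge n-4$ pins down $\kappa(G) = 0$ and $\kappa(X) = n-4$, after which Theorem \ref{ThC} applied to $G$ finishes. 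Your anticipated ``main obstacle'' of ruling out a four-dimensional Calabi--Yau fibre under (iii) never arises in this approach. Finally, even under (i) your sketch is off: the paper does not conclude $\dim F = 3$ there but rather derives a contradiction (via $0 = q(F) \ge \dim F \ge 3$, using that the albanese image $F_a$ of $F$ must be an abelian variety), so only alternative (1) occurs under (i).
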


\vskip 1pc
{\it We prove Theorem $\ref{ThA}$.}
Let $f'' : X'' \to X$ be a dlt blowup with $E_{f''}$ the reduced $f''$-exceptional divisor
(cf.~\cite[Theorem 10.4]{Fj}). Namely,
$X''$ is $\Q$-factorial, $(X'', E_{f''})$ is dlt (and hence $X''$ is klt) and
\begin{equation}\label{EqC1}
(f'')^*K_X = K_{X''} + E''
\end{equation}
where $E''$ is $f''$-exceptional and satisfies $E'' \ge E_{f''}$.

Since $f''$ is birational, $K_{X''}$ is relative big over $X$.
By \cite[Theorem 1.2, Definition 3.6.7]{BCHM},
there is a birational map $\sigma: X'' \dasharrow X'$ over $X$,
such that $\sigma^{-1}$ does not contract any divisor,
$X'$, like $X''$, has only $\Q$-factorial klt singularities
and $K_{X'}$ is relatively nef over $X$ via a birational morphism $f' : X' \to X$.
Pushing forward the equality (\ref{EqC1}) above by $\sigma_*$, we get
\begin{equation}\label{EqC2}
(f')^*K_X = K_{X'} + E'
\end{equation}
where $E' := \sigma_* E'' \ge \sigma_* E_{f''} = E_{f'}$
and $E_{f'}$ is the reduced $f'$-exceptional divisor.
Since $K_{X'}$ is relatively $f'$-nef over $X$, our $K_{X'}$ is nef by Lemma \ref{rnef}:

\begin{claim}\label{nef}
$K_{X'}$ is nef.
\end{claim}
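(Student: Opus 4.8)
The plan is to recognize Claim \ref{nef} as a direct consequence of Lemma \ref{rnef}, which was established precisely for this purpose. The setup just before the claim has produced a projective variety $X'$ with only $\Q$-factorial klt singularities, a birational morphism $f' : X' \to X$, and the key property that $K_{X'}$ is relatively nef over $X$. This is exactly the hypothesis structure of Lemma \ref{rnef}, so the work has already been front-loaded into that lemma; the claim itself should follow in one or two lines.

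First I would check that all the hypotheses of Lemma \ref{rnef} are genuinely met in the ambient situation of Theorem \ref{ThA}. The lemma requires a hyperbolic target $W$, a variety $V$ with at worst klt singularities mapping to $W$ via a morphism $h$ that is generically finite onto its image, with $K_V$ relatively nef over $W$. Here the role of $W$ is played by the hyperbolic variety $X$ itself (it is algebraic Lang hyperbolic by the hypothesis of Theorem \ref{ThA}), the role of $V$ by $X'$, and the role of $h$ by the birational morphism $f' : X' \to X$. Since $f'$ is birational, it is certainly generically finite onto its image $X$, and $X'$ has klt singularities by construction. The relative nefness of $K_{X'}$ over $X$ is recorded in equation (\ref{EqC2}) and the surrounding discussion. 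Thus every hypothesis transfers directly.

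I would then simply invoke Lemma \ref{rnef} to conclude that $K_{X'}$ is nef, which is the full content of Claim \ref{nef}. It is worth reproducing the short argument inside the lemma for the reader's orientation: if $K_{X'}$ were not nef, the cone theorem \cite[Theorem 3.7]{KM} (applicable since $X'$ is klt) would produce a $K_{X'}$-negative extremal rational curve $C$; because $X$ is hyperbolic and hence contains no rational curve, $C$ must be contracted by $f'$, giving a curve contracted to a point with $K_{X'} . C < 0$, in contradiction with the relative nefness of $K_{X'}$ over $X$.

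In this case there is essentially no obstacle: the genuine difficulty was the construction of $X'$ with $K_{X'}$ relatively nef (carried out via the dlt blowup of \cite{Fj} and the results of \cite{BCHM} on relative log terminal models), together with the hyperbolicity input isolated in Lemma \ref{rnef}. The only point demanding mild care is confirming that the klt hypothesis on $X'$ is needed solely to apply the cone theorem, and that the absence of rational curves in $X$ — a consequence of algebraic Lang hyperbolicity, since a non-constant rational curve would violate hyperbolicity after composing with an elliptic or abelian cover — is what forces the contracted locus to absorb any would-be $K_{X'}$-negative curve. Hence I expect the proof of the claim to be a clean citation of Lemma \ref{rnef} with, at most, a one-sentence reminder of its mechanism.
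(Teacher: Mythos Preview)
Your proposal is correct and matches the paper's approach exactly: the paper proves Claim \ref{nef} in a single clause, ``Since $K_{X'}$ is relatively $f'$-nef over $X$, our $K_{X'}$ is nef by Lemma \ref{rnef},'' and your write-up simply unpacks the hypothesis check for that lemma. The only minor imprecision is that the relative nefness of $K_{X'}$ is stated in the text preceding (\ref{EqC2}) rather than in the equation itself, but your phrase ``surrounding discussion'' already covers this.
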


We continue the proof of Theorem \ref{ThA}.
Let $\tau: X'\dasharrow Y$ be a nef reduction of the nef divisor $K_{X'}$,
and $n(K_{X'}) := \dim Y$ the nef dimension of $K_{X'}$;
let $F$ be a general (compact) fibre of $\tau$;
then $K_F = (K_{X'})_{| F}$ is numerically trivial
(cf.~Theorem \ref{Th8aut}).
%(cf.~\cite[Theorem 2.1]{8aut}).

\begin{lemma}\label{I}
Assume the hypotheses of Theorem \ref{ThA}. For the $X'$ and
$\tau: X'\dasharrow Y$ defined above,
it is impossible that $\dim Y = 0$.
\end{lemma}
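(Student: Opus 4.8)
The plan is to argue by contradiction: I would suppose $\dim Y = 0$ and show that this forces $X$ \emph{itself} to be a Calabi-Yau variety, which is impossible for a hyperbolic variety by Conjecture \ref{nALH}. The point of the argument is that the degenerate case $\dim Y=0$ collapses the auxiliary model $X'$ all the way back down onto $X$, so that hyperbolicity of $X$ can be used directly.

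First I would note that $\dim Y = 0$ forces $K_{X'} \equiv 0$: the only (compact) fibre of $\tau$ is $X'$ itself, so the numerical triviality of $K_F = (K_{X'})_{| F}$ for the general fibre $F$ (Theorem \ref{Th8aut}(1)) reads $K_{X'} \equiv 0$. Since $X'$ is birational to the hyperbolic variety $X$, it is non-uniruled (a hyperbolic variety contains no rational curve), and $X'$ is $\Q$-factorial klt, so $K_{X'}$ is $\Q$-Cartier. Lemma \ref{pe} then upgrades $K_{X'} \equiv 0$ to the statement that $X'$ has at worst canonical singularities and $K_{X'} \sim_{\Q} 0$.

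Next I would descend to $X$. Feeding $K_{X'} \sim_{\Q} 0$ into the equality $(f')^*K_X = K_{X'} + E'$ from (\ref{EqC2}) gives $(f')^*K_X \sim_{\Q} E'$; applying $f'_*$ and using that $E'$ is $f'$-exceptional yields $K_X \sim_{\Q} 0$, whence $(f')^*K_X \sim_{\Q} 0$ and therefore $E' \sim_{\Q} 0$. As $E' \ge 0$ is effective and $f'$-exceptional, the negativity lemma forces $E' = 0$. Thus $(f')^*K_X = K_{X'}$, i.e.\ $f'$ is crepant, so the canonicity of $X'$ transfers to $X$; together with $K_X \sim_{\Q} 0$ this makes $X$ a canonical variety with trivial canonical class. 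It remains to check $q(X) = 0$: were $q(X) > 0$, Lemma \ref{alb} would cover $X$ by images of abelian varieties, producing a non-constant map from an abelian variety into $X$ and contradicting hyperbolicity. Hence $q(X) = 0$ and $X$ is a Calabi-Yau variety. Finally, being an algebraic Lang hyperbolic Calabi-Yau variety, $X$ has every birational morphism onto a normal projective variety an isomorphism (Remark \ref{rThA}(5)) and is absolutely minimal (Remark \ref{rThA}(4)), so $X$ satisfies both hypotheses of Conjecture \ref{nALH}. Since that conjecture is assumed for all varieties birational to $X$, in particular for $X$, it asserts that $X$ is not algebraic Lang hyperbolic, contradicting the standing hypothesis; hence $\dim Y \ne 0$.

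I expect the main obstacle to be the \emph{descent step}: verifying that $\dim Y = 0$ really collapses the whole picture back to $X$, i.e.\ that $E' = 0$ and that $X$ itself (not merely the intermediate model $X'$) is Calabi-Yau. This is exactly what lets us invoke hyperbolicity of $X$ directly and bypass the contraction/survival difficulty flagged in \ref{setup1.1}: there is no need to transport a non-hyperbolic subvariety from $X'$ down to $X$, because once $X$ is shown to be Calabi-Yau the offending abelian-variety images already live on $X$.
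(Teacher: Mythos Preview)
Your proof is correct and follows essentially the same route as the paper's: assume $\dim Y=0$, deduce $K_{X'}\equiv 0$, upgrade via Lemma \ref{pe} to canonical singularities and $K_{X'}\sim_{\Q}0$, descend to show $X$ itself is Calabi-Yau, and then invoke Conjecture \ref{nALH} together with Remark \ref{rThA} for the contradiction.

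The only noteworthy difference is in the descent step. The paper simply pushes forward to get $K_X=f'_*K_{X'}\sim_{\Q}0$ and then cites \cite[Corollary 1.5]{Ka88} to conclude that $X$ has canonical singularities. You instead argue internally: from $(f')^*K_X\sim_{\Q}E'$ and the pushforward $K_X\sim_{\Q}0$ you get $E'\sim_{\Q}0$, hence $E'=0$, hence $f'$ is crepant and $X$ inherits canonical singularities from $X'$. Your route is a bit more explicit and avoids the external reference; note, though, that the vanishing $E'=0$ follows already from ``effective and $\Q$-linearly trivial implies zero'' and does not really require the negativity lemma. Either way the conclusion is the same.
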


\begin{proof}
Consider the case $\dim Y = 0$. Then $K_{X'} \equiv 0$ (numerically zero).
Since $X$ is hyperbolic, $X$ and hence $X'$ are non-uniruled.
By Lemma \ref{pe}, $X'$ has at worst canonical singularities, and $K_{X'} \sim_{\Q} 0$;
the same hold for $X$, noting that $K_X = f'_*K_{X'} \sim_{\Q} 0$
(cf.~\cite[Corollary 1.5]{Ka88}).

We claim that $X$ is a Calabi-Yau variety. We only need to show that
the irregularity $q(X) = 0$.
Suppose the contrary that $q(X) > 0$. Then, by Lemma \ref{alb},
$X$ is covered by images of abelian varieties of dimension equal to $q(X)$.
This contradicts the hyperbolicity of $X$.
Therefore, $q(X) = 0$. Hence $X$ is a Calabi-Yau variety.
This contradicts the hyperbolicity of $X$,
Remark \ref{rThA} and the assumed Conjecture \ref{nALH}.
This proves Lemma \ref{I}.
\end{proof}

\begin{lemma}\label{II}
Assume the hypotheses of Theorem \ref{ThA}. For the $X'$ and
$\tau: X'\dasharrow Y$ defined preceding Lemma \ref{I},
it is impossible that $1 \le \dim Y < \dim X'$.
\end{lemma}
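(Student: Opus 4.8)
We must rule out the intermediate range $1 \le \dim Y < \dim X'$ for the nef reduction $\tau : X' \dasharrow Y$ of the nef divisor $K_{X'}$. The plan is to study a general fibre $F$ of $\tau$ and prove that, after passing to a suitable minimal model, its normalization is an absolutely minimal Calabi--Yau variety, thereby manufacturing a non-hyperbolic subvariety and contradicting the hyperbolicity of $X$. First I would record that for a general fibre $F$ of $\tau$ we have $K_F = (K_{X'})_{|F} \equiv 0$ by Theorem \ref{Th8aut}(1). Since $X$ is hyperbolic it is non-uniruled, hence so is $X'$, and therefore so is $F$. I would then resolve $F$ (or pass to its normalization) and run the relative MMP exactly as in Lemma \ref{genfin}: because $g_{|F}$ (here the composite $F \hookrightarrow X' \to X$) is generically finite, \cite[Theorem 1.2]{BCHM} yields a minimal model $F_m$ with at worst canonical singularities and $K_{F_m}$ nef.

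The heart of the argument is to upgrade $K_{F_m} \equiv 0$ to $K_{F_m} \sim_{\Q} 0$ with $q(F_m)=0$, i.e.\ to show $F_m$ is a genuine Calabi--Yau variety. The plan here mirrors Lemma \ref{I}: invoke Lemma \ref{pe} to obtain canonical singularities and $K_{F_m} \sim_{\Q} 0$ from non-uniruledness and numerical triviality, and then kill the irregularity. The step $q(F_m) = 0$ is where I expect to spend the most care: if $q(F_m) > 0$, then by Lemma \ref{alb} the variety $F_m$ is covered by images of positive-dimensional abelian varieties, and I must transport this covering through the birational map $F_m \dasharrow F$ and the finite map down to $X$ to contradict hyperbolicity. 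This is precisely the subtlety flagged in \ref{setup1.1}: a subvariety witnessing non-hyperbolicity on $F_m$ could be contracted on $F$. The resolution is to argue that the normalization of $F$ \emph{itself} is Calabi--Yau, so that the non-constant map from the abelian variety composes with the finite morphism $F \to g(F) \subseteq X$ to contradict the hyperbolicity of $X$ directly.

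Once $F_m$ (equivalently the normalization of $F$) is shown to be an absolutely minimal Calabi--Yau variety, the final step distinguishes two cases by dimension. When $\dim X' \le 3$, the fibre dimension is at most $2$, and Proposition \ref{solconj}(3) says no normal Calabi--Yau surface (nor curve) is hyperbolic, giving an immediate contradiction without invoking Conjecture \ref{nALH}. In higher dimension, I would apply Conjecture \ref{nALH} (assumed for all varieties birational to subvarieties of $X$) to $F_m$, producing a non-constant holomorphic map $V \to F_m$ from an abelian variety; composing with $F_m \to F \to X$ contradicts hyperbolicity. To legitimately apply Conjecture \ref{nALH} I must verify its hypothesis that every birational morphism from $F_m$ onto a normal projective variety is an isomorphism, which follows from Remark \ref{rThA}(5) once $F_m$ is known to be hyperbolic and Calabi--Yau.

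The main obstacle is the passage between the minimal model $F_m$ and the actual fibre $F \subseteq X'$: establishing that the normalization of $F$ carries the Calabi--Yau structure (rather than merely some birational model of $F$) is what makes the contradiction land on $X$ rather than evaporate under a birational contraction. I would handle this by showing the birational map $F_m \dasharrow F$ is in fact a morphism using absolute minimality (Remark \ref{rThA}(4)) together with the non-existence of rational curves on hyperbolic targets, so that $F_m$ maps finitely onto $g(F)$ and the hyperbolicity contradiction is genuine.
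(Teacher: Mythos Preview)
Your overall strategy---show that the general fibre of the nef reduction yields a Calabi--Yau variety mapping finitely into $X$, then contradict hyperbolicity via Conjecture~\ref{nALH}---is the same as the paper's. But you take an unnecessary detour that introduces a real gap.

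The paper does \emph{not} resolve $F$ and run MMP to produce an auxiliary $F_m$. Instead it observes that $F$, being a general fibre of the almost holomorphic map $\tau$ from the klt variety $X'$, is already normal with $K_F$ $\Q$-Cartier, so Lemma~\ref{pe} applies \emph{directly to $F$}: one gets at once that $F$ has canonical singularities and $K_F \sim_{\Q} 0$. Your route through $F_m$ requires the assertion $K_{F_m}\equiv 0$, which you state but never justify; from Lemma~\ref{genfin} you only know $K_{F_m}$ is nef and relatively ample over $X$, and passing from $K_F\equiv 0$ on $F$ to $K_{F_m}\equiv 0$ on a birational model is not automatic (it would amount to abundance at numerical dimension zero, i.e.\ essentially Lemma~\ref{pe} again, applied to $F$). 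Once you grant Lemma~\ref{pe} on $F$, the MMP step is redundant.

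The second difference is how one descends to the image in $X$. You propose to argue via absolute minimality (Remark~\ref{rThA}(4)) that $F_m \dasharrow F$ is a morphism. The paper instead factors the birational morphism $F \to f'(F)$ through the normalization $F^n$ of $f'(F)$ and invokes \cite[Corollary~1.5]{Ka88}: since $F$ has canonical singularities with $K_F\sim_{\Q}0$ and $F\to F^n$ is birational, $F^n$ also has canonical singularities with $K_{F^n}\sim_{\Q}0$. This single citation replaces your entire ``main obstacle'' paragraph. The irregularity and the application of Conjecture~\ref{nALH} are then carried out on $F^n$, exactly as you outline for $F_m$, using that the normalization map $F^n\to f'(F)\subset X$ is finite so $F^n$ is hyperbolic. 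Your case split on $\dim F\le 2$ versus $\dim F\ge 3$ is harmless but absent from the paper, which simply uses the blanket hypothesis that Conjecture~\ref{nALH} holds for subvarieties of $X$.
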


\begin{proof}
Consider the case $1 \le \dim Y < \dim X'$.
A general fibre $F$ of $\tau : X' \dasharrow Y$ satisfies
$1 \le \dim F = \dim X - \dim Y < \dim X$. Also
$K_F \equiv 0$.
Since $X$ and hence the general fibre $F$
of $\tau : X' \dasharrow Y$ are not covered by rational curves
by the hyperbolicity of $X$, $F$ is not uniruled.
By Lemma \ref{pe}, $F$ has at worst canonical singularities
and $K_F \sim_{\Q} 0$.

Factor the birational map $X' \supset F \to f'(F) \subset X$
as $F \to F^n \to f'(F)$, where $F \to F^n$ is a birational morphism and
$F^n \to f'(F)$ is the normalization.
By \cite[Corollary 1.5]{Ka88},
$F^n$ has only canonical singularities and $K_{F^n} \sim_{\Q} 0$.

If $q(F^n) > 0$, by Lemma \ref{alb},
$F^n$ and hence $f'(F)$ and $X$ are covered by images of abelian varieties
of dimension equal to $q(F^n)$, contradicting the hyperbolicity of $X$.
Thus $q(F^n) = 0$, so $F^n$ is a Calabi-Yau variety.
By the assumed Conjecture \ref{nALH} and Remark \ref{rThA}, there is a non-constant
holomorphic map $V \to F^n$ from an abelian variety $V$, which, combined with the (birational and)
finite map $F^n \to f'(F)$, produces a non-constant holomorphic map $V \to X$,
contradicting the hyperbolicity of $X$.
This proves Lemma \ref{II}.
\end{proof}

By the two lemmas above, we are left with the case
$\dim Y = \dim X'$. Namely,
the nef dimension $n(K_{X'}) = \dim X'$.
By the assumed abundance Conjecture \ref{Abund} (1),
$K_{X'}$ is semi-ample. Hence
$\Phi_{|sK_{X'}|}$, for some $s > 0$, is a morphism onto a normal variety $X_c$, with connected fibres,
and there is an ample $\Q$-divisor $H_c$ on $X_c$ such that $K_{X'} \sim_{\Q} \Phi_{|sK_{X'}|}^*H_c$.
Clearly, this map which is now holomorphic,
is (up to birational equivalence) a nef reduction of $K_{X'}$ and
also denoted as $\tau : X' \to X_c$. In other words, $Y = X_c$, $K_{X'}$
is big (and nef), and $\tau$ is birational.
Pushing forward the equality $K_{X'} \sim_{\Q} \tau^*H_c$
by $\tau_*$, we get $K_{X_c} \sim_{\Q} H_c$
and hence $K_{X'} \sim_{\Q} \tau^*K_{X_c}$
(so that $\tau$ is a crepant birational morphism) with $K_{X_c}$ an ample $\Q$-divisor.
Since $X'$ is klt and $\tau$ is crepant, $X$ is also klt.
By \cite[Corollary 1.5]{HM}, every fibre of $\tau : X' \to X_c$ is rationally chain connected
and hence is contracted to a point by
the birational morphism $f' : X ' \to X$ due to the hyperbolicity of $X$
and the absence of rational curves on $X$.
Thus $f'$ factors through $\tau$, i.e., there is a birational
morphism $f_c : X_c \to X$ such that $f' = f_c \circ \tau$
(cf.~\cite[Proof of Lemma 14]{Ka81}).
In summary, we have the following commutative diagram:

\par \vskip 2pc

\begin{center}\psset{arrows=->, colsep=2cm, rowsep=1cm, arrowscale=1.5, nodesep=2mm} \everypsbox{\footnotesize}
\begin{psmatrix}
$X''$ & $X'$ & $X_c$ \\
$X$ & $X$ & $X$
\ncline[linestyle=dashed]{1,1}{1,2}
\Aput{$\sigma$}
\ncline{1,2}{1,3}
\Aput{$\phi_{|sK_{X'}|}$}
\ncline[arrows=-, doubleline=true, nodesep=4mm]{2,1}{2,2}
\ncline[arrows=-, doubleline=true, nodesep=4mm]{2,2}{2,3}
\ncline{1,1}{2,1}
\Bput{$f''$}
\ncline{1,2}{2,2}
\Bput{$f'$}
\ncline{1,3}{2,3}
\Bput{$f_c$}
\end{psmatrix}
\end{center}

\par \vskip 1pc
Pushing forward the equality (\ref{EqC2}) above by $\tau_*$, we get
$$ f_c^*K_X = K_{X_c} + E_c .$$
Here $E_c := \tau_* E' \ge \tau_* E_{f'} = E_{f_c}$  and $E_{f_c}$ is the reduced
$f_c$-exceptional divisor.
So the image $f_c(E_c)$ is contained in $\Nklt(X)$, the non-klt locus of $X$,
which is a Zariski-closed subset of $X$ consisting of exactly the non-klt points of $X$.

{\it This proves Theorem $\ref{ThA}$; see Remark \ref{rThA} (1) for the final part.}

\par \vskip 1pc
If we do not assume Conjecture \ref{nALH} in Theorem \ref{ThA},
the argument above actually shows:

\begin{remark}
Let $X$ be a $\Q$-Gorenstein normal projective variety of dimension $n$ which is algebraic Lang hyperbolic.
Assume either $n \le 3$ or Conjecture $\ref{Abund} (1)$ holds for all varieties of dimension $\le n$. Then
either $K_X$ is ample at smooth points and klt points of $X$ as
detailed in Theorem $\ref{ThA}$;
or $X$ is a Calabi-Yau variety of dimension $n \ge 3$; or
$X$ is covered by subvarieties $\{F_t'\}$ whose normalizations
are Calabi-Yau varieties of the same dimension $k$ with $3 \le k < n$.
\end{remark}

\par \vskip 1pc
Since Conjecture \ref{nALH} is true for surfaces,
and the abundance Conjecture \ref{Abund} is known in dimension $\le 3$, we can and will soon prove
the following consequences of Theorem \ref{ThA}.

\begin{corollary}\label{Cor1}
Let $X$ be a $\Q$-Gorenstein normal projective surface which is algebraic Lang hyperbolic.
Then $X$ is of general type and the canonical divisor $K_X$ is ample.
\end{corollary}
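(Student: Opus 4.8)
The plan is to feed $X$ into Theorem \ref{ThA}, whose conjectural hypotheses become \emph{theorems} in the surface range, and then to promote the resulting statement ``$K_X$ ample at smooth and klt points'' to full ampleness of $K_X$ by the Nakai--Moishezon criterion. First I would verify the hypotheses of Theorem \ref{ThA}: every variety birational to $X$, or to a subvariety of $X$, has dimension at most $2$, so by Proposition \ref{solconj} (3) Conjecture \ref{nALH} holds in dimension $\le 2$, and by Proposition \ref{solconj} (4) Conjecture \ref{Abund} (1) holds in dimension $\le 3$. Hence both inputs hold for our $X$, and Theorem \ref{ThA} supplies a birational morphism $f_c : X_c \to X$ with $X_c$ klt, $K_{X_c}$ ample, and
\[
f_c^* K_X = K_{X_c} + E_c, \qquad E_c \ge 0 \ \text{effective, } f_c\text{-exceptional, } f_c(E_c) \subseteq \Nklt(X).
\]

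Next I would apply the Nakai--Moishezon criterion to the $\Q$-Cartier divisor $K_X$ on the normal projective surface $X$, using Mumford's intersection theory. For an arbitrary irreducible curve $C \subset X$, let $C_c \subset X_c$ be its strict transform. Since $f_c(E_c)$ is a finite set, $C_c$ is not a component of $\Supp E_c$, so $E_c \cdot C_c \ge 0$, while $K_{X_c} \cdot C_c > 0$ by ampleness of $K_{X_c}$; the projection formula then gives $K_X \cdot C = f_c^* K_X \cdot C_c = K_{X_c} \cdot C_c + E_c \cdot C_c > 0$. For the self-intersection, the projection formula yields $K_X^2 = (f_c^* K_X)^2 = (K_{X_c} + E_c)^2$; because $f_c^* K_X \cdot E_c = K_X \cdot (f_c)_* E_c = 0$ forces $K_{X_c} \cdot E_c = -E_c^2$, this collapses to $K_X^2 = K_{X_c}^2 - E_c^2 \ge K_{X_c}^2 > 0$, where $E_c^2 \le 0$ by negative definiteness of the exceptional locus of $f_c$. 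Thus $K_X$ is ample. Finally $X$ is of general type, being birational to the klt surface $X_c$ whose canonical class $K_{X_c}$ is ample.

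The entire content of the corollary lies in this last upgrade, so the step I would watch most carefully is the interaction with the non-klt locus, where Theorem \ref{ThA} alone does \emph{not} guarantee positivity of $K_X$. What makes it go through is precisely that every failure of positivity is confined to the effective, $f_c$-exceptional, negative-definite divisor $E_c$ supported over $\Nklt(X)$: it meets each strict transform $C_c$ non-negatively and contributes $-E_c^2 \ge 0$ to $K_X^2$, so it can destroy neither Nakai--Moishezon inequality. (Note that I need not decide whether $X$ itself is klt, i.e.\ whether $E_c = 0$; the two numerical conditions hold regardless.) I therefore expect no genuine geometric obstacle, only the bookkeeping of confirming that the Nakai--Moishezon criterion and the projection formula are valid for $\Q$-Cartier classes on a possibly singular normal surface.
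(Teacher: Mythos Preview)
Your proof is correct and follows essentially the same route as the paper: apply Theorem \ref{ThA} (whose conjectural inputs hold in dimension $\le 2$ by Proposition \ref{solconj}), then upgrade to full ampleness of $K_X$ via Nakai--Moishezon and the projection formula. The paper states this last step as a one-line observation (``$K_X = (f_c)_* K_{X_c}$ is ample''), while you spell out the two Nakai--Moishezon inequalities explicitly using $f_c^* K_X = K_{X_c} + E_c$ and negative definiteness of $E_c$; this is just a more detailed execution of the same idea.
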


We can not remove the second alternative below even when $X$ is smooth,
because, for instance, we do not know, at the moment, the non-hyperbolicity
of a general smooth Calabi-Yau
$n$-fold and a Hyperk\"ahler $n$-fold when $n > 2$.

\begin{corollary}\label{Cor2}
Let $X$ be a $\Q$-Gorenstein normal projective threefold which is algebraic Lang hyperbolic.
Then either
the canonical divisor $K_X$ is ample
at the smooth points and klt points of $X$;
or $X$ is a Calabi-Yau variety.
\end{corollary}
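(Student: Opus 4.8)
The plan is to run the very argument that proves Theorem \ref{ThA}, observing that in dimension three every conjecture invoked there is already a theorem, the sole exception being Conjecture \ref{nALH} in dimension exactly three, which is precisely what forces the Calabi-Yau alternative. First I would record that Conjecture \ref{Abund} (1) holds for all varieties birational to $X$: such varieties are three-dimensional, and Proposition \ref{solconj} (4) establishes \ref{Abund} (1) (even without Hyp(A)) in dimension $\le 3$. Thus the only hypothesis of Theorem \ref{ThA} not available to us unconditionally is the validity of Conjecture \ref{nALH}.

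Following the proof of Theorem \ref{ThA}, I would take a dlt blowup $f'' : X'' \to X$, run the relative MMP over $X$ to obtain $f' : X' \to X$ with $X'$ a $\Q$-factorial klt variety and $K_{X'}$ relatively nef over $X$, and invoke Lemma \ref{rnef} (Claim \ref{nef}) to see that $K_{X'}$ is in fact nef. Let $\tau : X' \dasharrow Y$ be a nef reduction of $K_{X'}$, so that $\dim Y = n(K_{X'})$. Since $f'$ is birational, $\dim X' = \dim X = 3$, and I would split the argument according to $\dim Y \in \{0,1,2,3\}$.

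If $\dim Y = 0$, then $K_{X'} \equiv 0$, and the reasoning of Lemma \ref{I} (through Lemmas \ref{pe} and \ref{alb}) shows that $X$ has canonical singularities, $K_X \sim_{\Q} 0$, and $q(X) = 0$; that is, $X$ is a Calabi-Yau variety, which is the second alternative in the statement, so here I simply stop rather than appeal to Conjecture \ref{nALH} in dimension three. If $1 \le \dim Y < 3$, then a general fibre $F$ of $\tau$ has $\dim F = 3 - \dim Y \in \{1,2\}$, and the argument of Lemma \ref{II} applies: the normalization $F^n$ of $F$ has canonical singularities and $K_{F^n} \sim_{\Q} 0$. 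If $q(F^n) > 0$, Lemma \ref{alb} covers $F^n$, hence $X$, by images of abelian varieties, contradicting hyperbolicity; this already disposes of the case $\dim F^n = 1$, where necessarily $q(F^n) = 1$. Otherwise $q(F^n) = 0$ and $F^n$ is a Calabi-Yau surface, which is not algebraic Lang hyperbolic by Proposition \ref{solconj} (3) (the dimension $\le 2$ case of Conjecture \ref{nALH}); composing a non-constant map from an abelian variety with the finite map $F^n \to f'(F) \subset X$ again contradicts hyperbolicity. Hence this case does not occur. Finally, if $\dim Y = 3 = \dim X'$, then $n(K_{X'}) = \dim X'$, and Conjecture \ref{Abund} (1)---a theorem in dimension three by Proposition \ref{solconj} (4)---renders $K_{X'}$ semi-ample; the remainder of the proof of Theorem \ref{ThA} then produces a crepant birational morphism onto a klt variety $X_c$ with $K_{X_c}$ ample and $f_c^* K_X = K_{X_c} + E_c$, $E_c \ge 0$ supported over $\Nklt(X)$, giving ampleness of $K_X$ at the smooth and klt points of $X$.

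The point to stress is that there is no genuine obstacle here: the whole difficulty is concentrated in the case $\dim Y = 0$, whose exclusion would require Conjecture \ref{nALH} in dimension three---equivalently, the non-hyperbolicity of a three-dimensional Calabi-Yau variety---and this is exactly the exceptional alternative allowed by the statement. The cases $1 \le \dim Y \le 2$ consume only the surface-and-curve input of Proposition \ref{solconj} (3), and the full-dimensional case uses only the three-dimensional abundance theorem Proposition \ref{solconj} (4). I therefore expect the write-up to be essentially bookkeeping: a careful record of the dimension in which each invoked result is applied.
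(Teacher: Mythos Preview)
Your proposal is correct and follows essentially the same route as the paper: rerun the proof of Theorem~\ref{ThA}, split according to the nef dimension $n(K_{X'}) = \dim Y \in \{0,1,2,3\}$, and observe that Case~(I) yields the Calabi--Yau alternative without invoking Conjecture~\ref{nALH}, Case~(II) is excluded using only the $\le 2$-dimensional instance of Conjecture~\ref{nALH} (Proposition~\ref{solconj}~(3)), and Case~(III) uses three-dimensional abundance (Proposition~\ref{solconj}~(4)). One small wording slip: in Case~(II) the variety $F^n$ is the normalization of the image $f'(F)\subset X$, not of $F$ itself (which, as a general fibre in a klt variety, is already normal); your later reference to the finite map $F^n \to f'(F)$ shows you have the right picture.
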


\begin{setup}\label{PfCor1-2}
{\bf Proof of Corollaries \ref{Cor1} and \ref{Cor2}}
\end{setup}

We use the fact that Conjecture \ref{nALH} holds in dimension $\le 2$,
and both Conjectures \ref{Abund} (1) and (2) (even without the extra Hyp(A))
hold in dimension $\le 3$ (cf.~Proposition \ref{solconj}).

Corollary \ref{Cor1} is a consequence of Theorem \ref{ThA} and the observation:
if $f_c : X_c \to X$ is a birational morphism between $\Q$-Gorenstein normal projective surfaces
and $K_{X_c}$ is ample, then $K_X = (f_c)_*K_{X_c}$ is also ample,
by using Nakai-Moishezon ampleness criterion and the projection formula.

For Corollary \ref{Cor2}, we follow the argument for the proof of Theorem \ref{ThA}.
Thus we have to consider Cases (I) $\dim Y = 0$, (II) $0 < \dim Y < \dim X$  and (III) $\dim Y = \dim X$.
In Case (I), $X$ has been proven to be a Calabi-Yau variety of dimension three; for this purpose,
Conjecture \ref{nALH} was not used. In Case (II), a contradiction has been deduced utilizing
Conjecture \ref{nALH} in dimension $\le 2$.
%(cf.~Proposition \ref{solconj}).
In Case (III), using the proven abundance conjecture in dimension $3$, we get the first possibility
in the conclusion part of Corollary \ref{Cor2}. This proves Corollary \ref{Cor2}.

\begin{setup}
{\bf Proof of Theorems \ref{ThC} and \ref{ThB}}
\end{setup}

If we let $g : X \to W$ in Theorem \ref{ThB} be the identity map $\id_X : X \to X$,
we get Theorem \ref{ThC}; we also use the observation that
a birational finite morphism from a normal variety like $X_m$ or $F$
in Theorem \ref{ThB} is just the normalization map.
Thus we have only to (and are going to) prove Theorem \ref{ThB}.

We may assume Conjecture \ref{Abund} (2) (the case without the extra Hyp(A)
is similar and indeed easier); for varieties of dimension $\le 3$, this assumption is automatically
satisfied by Proposition \ref{solconj}.

Theorem \ref{ThB} is clearly true when $\dim X = 1$.
So we may assume that $n = \dim X \ge 2$.
We apply Lemma \ref{genfin} and
let the birational map $X \dasharrow X_m$ over $W$ and $g_m : X_m \to W$ be as there,
where $X_m$ has only canonical singularities and $K_{X_m}$ is relatively ample over $W$
and is also nef.
Since $g : X \to W$ is generically finite, so is $g_m : X_m \to W$.

Let $\tau : X_m \dasharrow Y$ be a nef reduction of the nef divisor $K_{X_m}$.
For our $\tau : X_m \dasharrow Y$ and $g_m : X_m \to W$ here (with $W$ algebraic Lang hyperbolic)
we closely follow the argument of Theorem \ref{ThA} for $\tau : X' \dasharrow Y$ and $f' : X' \to X$ there
(with $X$ algebraic Lang hyperbolic),
but we do not assume Conjecture \ref{nALH} here.

Suppose that $\dim Y = 0$, i.e., $n(K_{X_m}) = 0$, or $K_{X_m} \equiv 0$.
Now Lemma \ref{I} is applicable under the current weaker assumption.
Precisely, due to the lack of the assumption of Conjecture \ref{nALH} here,
instead of the contradiction there, we have that
$X_m$ is a Calabi-Yau variety.
The relative ampleness of $K_{X_m}$ over $W$ implies that $g_m : X_m \to g_m(X_m) = g(X) \subseteq W$ is a finite morphism.
This and the hyperbolicity of $W$ imply that $X_m$ is hyperbolic.
Further, $X_m$ is absolutely minimal (cf.~Remark \ref{rThA}).
By Proposition \ref{solconj}, $\dim X_m \ge 3$.
So Case \ref{ThB} (2) occurs.

Suppose that $1 \le n(K_{X_m}) < n$. Denote by $F$ a general fibre of $\tau : X_m \dasharrow Y$.
For this case, Lemma \ref{II} is applicable even under the current weaker assumption. So
$F$ is a Calabi-Yau variety.
The map
$(g_m)_{|F} : F \to g_m(F) \subset g_m(X_m) = g(X) \subseteq W$ is a finite morphism,
otherwise, a curve $C$ in $F$ is $g_m$-exceptional and hence $K_{X_m} . C > 0$,
by the relative ampleness of $K_{X_m}$ over $W$, contradicting the numerical triviality
of $(K_{X_m})_{|F}$ entailing $K_{X_m} . C = 0$.
This and the hyperbolicity of $W$ imply that $F$ is hyperbolic.
Further, $F$ is absolutely minimal (cf.~Remark \ref{rThA}).
By Proposition \ref{solconj}, $\dim F \ge 3$.
So Case \ref{ThB} (3) occurs. Indeed, for the final part,
the well known Iitaka addition for Kodaira dimension
implies that $\kappa(X) \le \kappa(F) + \dim Y = \dim Y = n - \dim F \le n-3$.

Suppose that $n(K_{X_m}) = n$.
Then $K_{X_m}$ is semi-ample by Conjecture \ref{Abund} (2) (cf.~Remark \ref{rgenfin}).
Hence $K_{X_m}$ is nef and big, and there is a birational morphism
$\gamma : X_m \to Z$ onto a normal variety $Z$ such that
$K_{X_m} \sim_{\Q} \gamma^*H$ for an ample $\Q$-divisor $H$ on $Z$.
Hence both $X_m$ and $X$ are of general type.
Since $\gamma$ is birational, the projection formula implies that $K_Z = \gamma_*K_{X_m} \sim_{\Q} H$. Hence
$K_Z$ is ample and $K_{X_m} \sim_{\Q} \gamma^*K_Z$. Thus $Z$, like $X_m$, has only canonical singularities.
If $\gamma$ is not an isomorphism, then it has a positive-dimensional fibre.
By \cite[Corollary 1.5]{HM}, every fibre of $\gamma$ is rationally chain connected.
So we may assume that $\gamma$ contracts a rational curve $C$ on $X_m$ to a point on $Z$.
Since $W$ is hyperbolic, $g_m : X_m \to X \subseteq W$ contracts the rational curve $C$ on $X_m$ to a point on $W$.
This and $C . K_{X_m} = C . \gamma^*K_Z = \gamma_*C . K_Z = 0$
contradict the relative ampleness of $K_{X_m}$ over $W$.
So $\gamma : X_m \to Z$ is an isomorphism. Hence $K_{X_m}$ is ample.
Thus Case \ref{ThB} (1) occurs.
This proves Theorem \ref{ThB}.

\begin{setup}
{\bf Proof of Corollary \ref{Cor3}}
\end{setup}

For the assertion (1), we apply Theorem \ref{ThB} to the inclusion map $g: X \hookrightarrow W$
for a projective subvariety $X$ of $W$.
By Theorem \ref{ThB}, either Case \ref{ThB} (1) occurs and hence $X$ is of general type,
or Case \ref{ThB} (2) or (3) occurs. We use the the notation there:
birational morphism $g_m: X_m \to X$, etc.
If Case \ref{ThB} (2) (resp.~(3)) occurs,
by the assumed Conjecture \ref{nALH} and Remark \ref{rThA},
there is a non-constant holomorphic map
from an abelian variety $V$ to $X_m$ (resp.~to $F$),
thus, combined with the (birational and) finite morphism
$$g_m : X_m \to g_m(X_m) = g(X) = X \subseteq W$$
(resp.~$(g_m)_{| F} : F \to g_m(F) \subset g_m(X_m) = g(X) = X \subseteq W$), this map gives a
non-constant holomorphic map from $V$ to $W$,
contradicting the hyperbolicity of $W$.
This proves the assertion (1).

For the assertion (2), the case $\dim W = 1$ is clear.
We may assume that $\dim W \ge 2$.
We apply Theorem \ref{ThB} and let $g : X \to W$ be
the identity map $\id_X : X \to X$, with $W = X$.
Hence there is a birational morphism $g_m: X_m \to X$ such that
\ref{ThB} (1), (2) or (3) occurs.
We use the following known fact (cf.~\cite[Lemma 8.1]{Ka85}, or Lemma \ref{alb}):

\begin{fact}
If a projective variety $V$ has only canonical $($or more generally rational$)$ singularities,
then the albanese map $\alb_V : V \to \Alb(V)$ is a well defined morphism
and $\dim \Alb(V) = q(V) = h^1(V, \OO_V)$.
\end{fact}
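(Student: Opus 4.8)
The plan is to fix a projective resolution $\sigma : \widetilde{V} \to V$ and to exploit the two defining properties of rational singularities: $R^i\sigma_* \OO_{\widetilde V} = 0$ for all $i > 0$, together with $\sigma_* \OO_{\widetilde V} = \OO_V$ (the latter holding automatically since $V$ is normal). First I would settle the numerical assertion. Feeding these vanishings into the Leray spectral sequence $H^p(V, R^q\sigma_* \OO_{\widetilde V}) \Rightarrow H^{p+q}(\widetilde V, \OO_{\widetilde V})$, the sequence degenerates and the edge maps become isomorphisms $H^i(V, \OO_V) \isom H^i(\widetilde V, \OO_{\widetilde V})$ for every $i$. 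In particular $q(V) = h^1(V, \OO_V) = h^1(\widetilde V, \OO_{\widetilde V}) = q(\widetilde V)$. Since $\Alb(V) := \Alb(\widetilde V)$ by definition and $\dim \Alb(\widetilde V) = q(\widetilde V)$ for the smooth projective variety $\widetilde V$, this yields $\dim \Alb(V) = q(V) = h^1(V, \OO_V)$, as required.

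The remaining and main point is to upgrade the albanese rational map $\alb_V : V \dasharrow \Alb(V)$, defined as $\alb_{\widetilde V} \circ \sigma^{-1}$, to an everywhere-defined morphism. For this it suffices to show that $\alb_{\widetilde V} : \widetilde V \to \Alb(V)$ is constant along every fibre of $\sigma$. Granting this, since $\sigma$ is proper with connected fibres (because $\sigma_* \OO_{\widetilde V} = \OO_V$ and $V$ is normal), the rigidity and Stein-factorization argument of \cite[Proof of Lemma 14]{Ka81}, already invoked in Remark \ref{rThA}, produces a morphism $V \to \Alb(V)$ through which $\alb_{\widetilde V}$ factors; this factoring morphism is the desired $\alb_V$. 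Thus everything reduces to contracting each fibre $F := \sigma^{-1}(v)$ to a point under $\alb_{\widetilde V}$.

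To contract the fibres I would show that the reduced fibre $F$ admits only constant morphisms to an abelian variety. Applying the theorem on formal functions to $\sigma$ at $v$, the vanishing $R^1\sigma_* \OO_{\widetilde V} = 0$ forces $\varprojlim_n H^1(F_n, \OO_{F_n}) = 0$ for the infinitesimal neighbourhoods $F_n$ of the fibre, from which one extracts $h^1(F, \OO_F) = 0$; consequently $\mathrm{Pic}^0(F)$ is trivial, so $\Alb(F) = 0$ and every morphism $F \to A$ into an abelian variety is constant. In particular $\alb_{\widetilde V}(F)$ is a single point, which is exactly what is needed. When $V$ has at worst canonical singularities, which is the case actually used in the proof of Corollary \ref{Cor3}, I would instead use the cleaner geometric route of Remark \ref{rThA}: the fibres of $\sigma$ are rationally chain connected by \cite[Corollary 1.5]{HM}, and an abelian variety carries no rational curve, so $\alb_{\widetilde V}$ necessarily contracts each fibre.

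The step I expect to be the main obstacle is precisely the passage from the inverse-limit vanishing $\varprojlim_n H^1(F_n, \OO_{F_n}) = 0$ supplied by formal functions to the honest vanishing $h^1(F, \OO_F) = 0$ on the reduced fibre: a priori one controls only the thickened fibres, and extracting a single term requires that the transition maps $H^1(F_{n+1}, \OO_{F_{n+1}}) \to H^1(F_n, \OO_{F_n})$ be surjective, i.e.\ that the $H^2$ of the graded pieces $I^n/I^{n+1}$ of the defining ideal vanish. A Mittag-Leffler argument then lets surjectivity of the transitions together with the vanishing of the limit force every term, hence $h^1(F, \OO_F)$, to vanish; verifying the requisite $H^2$-vanishing (automatic once $\dim F \le 1$) is the delicate part. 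For the canonical case this obstacle evaporates entirely, and the rational-chain-connectedness route gives the cleanest proof of the statement as it is used downstream.
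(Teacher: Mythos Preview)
The paper does not supply its own proof of this Fact; it merely records it as known and cites \cite[Lemma 8.1]{Ka85} (see the sentence immediately preceding the Fact, and also the preamble to Lemma \ref{alb}). So there is no in-paper argument to compare yours against.

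Your treatment of the numerical assertion via the Leray spectral sequence is standard and correct, and your argument in the canonical case---contracting the fibres of $\sigma$ via rational chain connectedness \cite[Corollary 1.5]{HM}---is clean and matches the mechanism the paper itself uses repeatedly (Remark \ref{rThA} (4) and (5)). For the general rational-singularity clause, however, the gap you flag is a genuine one, not merely a technicality: the theorem on formal functions yields only $\varprojlim_n H^1(F_n,\OO_{F_n})=0$, and passing from this to $H^1(F,\OO_F)=0$ for the \emph{reduced} fibre requires surjectivity of the transition maps (equivalently the $H^2$-vanishing of the conormal graded pieces that you mention), which is not automatic once $\dim F\ge 2$. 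As written, your fibre-by-fibre route does not close this.

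The argument in the cited reference avoids this local analysis altogether: one works globally, using the Leray isomorphism $H^1(V,\OO_V)\isom H^1(\widetilde V,\OO_{\widetilde V})$ together with Hodge theory to identify $\mathrm{Pic}^0(V)_{\mathrm{red}}$ with $\mathrm{Pic}^0(\widetilde V)$ as abelian varieties, and then dualizes to obtain $\Alb(V)$ together with a morphism from $V$, without ever examining individual fibres of $\sigma$. Since the paper only invokes the Fact for $X_m$, which has canonical singularities, your \cite{HM}-based argument is fully adequate for the downstream application; but the parenthetical ``more generally rational'' clause is not established by the formal-functions approach you propose.
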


In Case \ref{ThB} (1), $X$ is of general type.

In Case \ref{ThB} (2), the irregularity $q(X_m) = 0$,
and hence $\Alb(X) = \Alb(X_m)$ is a point.
Thus $X$ itself is the fibre of $\alb_X$. Hence $\dim X \le 2$,
by the assumption. Thus $\dim X = 2$, by the extra assumption that $X = W$ has dimension at least two.
By Proposition \ref{solconj}, either $X$ has infinitely many rational or elliptic curves,
or it is birational to an abelian surface (and hence $q(X_m) = 2$), or it is of general type.
Since $X = W$ is hyperbolic and $q(X_m) = 0$, $X = W$ is of general type.

In Case \ref{ThB} (3), by the above fact, $\alb_{X_m} : X_m \to \Alb(X_m) = \Alb(X)$
is a well defined morphism. Since
the general fibre $F$ of the nef reduction map $\tau : X_m \dasharrow Y$ which is almost holomorphic,
is a Calabi-Yau variety, we have $q(F) = 0$, so $\Alb(F)$ is a point. By the universality of
the albanese map, the composition $F \hookrightarrow X_m \to \Alb(X_m)$
factors through $F \to \Alb(F)$. So $\alb_{X_m} : X_m \to \Alb(X_m)$
maps $F$ to a point. Hence $g_m(F) \subset X$ is contained in a
general fibre $G$ of $\alb_X : X \overset{g_m^{-1}}\dasharrow X_m \to \Alb(X_m) = \Alb(X)$.
Indeed, $\alb_X$ factors as
$$X \overset{g_m^{-1}}\dasharrow X_m \overset{\tau}\dasharrow Y \overset{\eta}\dasharrow \Alb(X_m) = \Alb(X)$$
for some rational map $\eta$, by applying \cite[Lemma 14]{Ka81} to the domain of the almost holomorphic map $\tau$.
Now $\dim G \ge \dim g_m(F) = \dim F \ge 3$,
with the last inequality shown in Theorem \ref{ThB}.
This contradicts the assumption.
This proves Corollary \ref{Cor3}.

\begin{setup}
{\bf Proof of Proposition \ref{PropA} and Theorem \ref{ThD}}
\end{setup}

We prove Theorem \ref{ThD}.
We first assume that $X$ has maximal albanese dimension.
Since abundance conjecture holds for varieties with maximal albanese dimension (cf.~Proposition \ref{solconj}),
we can apply Theorem \ref{ThC} to our $X$. So Case \ref{ThC} (1), (2) or (3) occurs.
We use the notation $g_m : X_m \to X$ there, where $K_{X_m}$ is relatively ample over $X$, and is also nef.
By \cite[Theorem 3.6]{Fj09}, $K_{X_m}$ is semi-ample.
So there exist a holomorphic map $\tau := \Phi_{|sK_{X_m}|} : X_m \to Y$ onto a normal variety $Y$,
for some $s > 0$, and an ample divisor $H$ on $Y$
such that $K_{X_m} \sim_{\Q} \tau^*H$. We can take this $\tau$ as a nef reduction of $K_{X_m}$ in
Theorem \ref{ThC}. Let $F$ be a general fibre of $\tau$.

If Case \ref{ThC} (1) occurs, we are in Case \ref{ThD} (1).

Consider Case \ref{ThC} (3). Then $(g_m)_{|F} : F \to g_m(F)$ is the
normalization map, and $F \subseteq X_m$ is a Calabi-Yau variety of dimension $\ge 3$.
Since $X$ and hence $X_m$ have maximal albanese dimension, i.e., $\dim \aaa(X_m) = \dim X_m$,
we may assume that the restriction $(\alb_{X_m})_{|F}$ to a general fibre $F$ of $\tau$,
is a generically finite morphism
onto the image $F_a \subseteq \Alb(X)$.
It is known that $F_a$, being a subvariety of an abelian variety, satisfies $\kappa(F_a) \ge 0$
with equality holding only when $F_a$ is the translation of a subtorus.
Now $0 = \kappa(F) \ge \kappa(F_a) \ge 0$. Thus $\kappa(F_a) = 0$ and hence $F_a$ is an abelian variety.
Therefore, $0 = q(F) \ge q(F_a) = \dim F_a = \dim F \ge 3$. This is a contradiction.

If Case \ref{ThC} (2) occurs, we get a similar contradiction by the arguments above, with $F = X_m$.

Next we assume that $\kappa(X) \ge n-3$.
Let $I_X: X \dasharrow Y$ be the Iitaka fibration
with $F$ a very general fibre.
%(cf.~\ref{setup2.1}).
So $\kappa(F) = 0$ and $\dim F = \dim X - \kappa(X) \le 3$ by the assumption.
Since $F$ is a subvariety of the hyperbolic variety $X$, it is also hyperbolic.

If $\dim F = 0$, then $X$ is of general type.
Since abundance conjecture holds for varieties of general type (cf.~Proposition \ref{solconj}),
we can apply Theorem \ref{ThC} to our $X$, and only Case \ref{ThC} (1) there occurs.
This fits Case \ref{ThD} (1).

Consider the case $\dim F \in \{1, 2, 3\}$.
Applying Theorem \ref{ThC} to the hyperbolic $F$ and noting that $\kappa(F) = 0$ and $1 \le \dim F \le 3$,
only Case \ref{ThC} (2) there occurs for $F$:
the normalization of $F$ is an absolutely minimal Calabi-Yau variety
of dimension three, so $\kappa(X) = n - \dim F = n - 3$; also these $F$ cover $X$.
This fits Case \ref{ThD} (2).

Finally, we assume that
$\dim \aaa(X) \ge n-3$ and $\kappa(\aaa(X)) \ge n-4$.
By the results obtained so far, we may add the extra assumptions:
$n > \dim \aaa(X)$ and $\kappa(X) \le n-4$.
Let $G$ be an irreducible component of a general fibre of the albanese map
$\alb_X : X \dasharrow \aaa(X) \subseteq \Alb(X)$.
Then $1 \le g:= \dim G = \dim X - \dim \aaa(X) \le 3$.
The hyperbolicity of $X$ implies that $X$ and hence $G$ are not uniruled.
So $G$ has a good minimal model in the sense of Kawamata \cite{Ka85b},
by the abundance theorem in dimension $\le 3$ (cf.~\cite[\S 3.13]{KM} or Proposition \ref{solconj}).
In particular, $\kappa(G) \ge 0$.
By Iitaka's $C_{n, n-g}$ proved in
\cite[Corollary 1.2]{Ka85}, $\kappa(X) \ge \kappa(G) + \kappa(\aaa(X)) \ge 0 + 0$.
By the assumptions,
$$n-4 \ge \kappa(X) \ge \kappa(G) + \kappa(\aaa(X)) \ge \kappa(G) + n-4 \ge n-4. $$
Thus the inequalities above all become equalities.
In particular, $\kappa(G) = 0$ and $\kappa(X) = n-4$.
Since $\kappa(X) \ge 0$, we have $n \ge 4$.
As in the previous paragraph, applying Theorem \ref{ThC} to $G$,
the normalization of $G$ is an absolutely minimal Calabi-Yau variety
of dimension three; also these $G$ cover $X$.
This fits Case \ref{ThD} (2).

This proves Theorem \ref{ThD}.

\par \vskip 1pc
Now we prove Proposition \ref{PropA}. Set $n := \dim X$.
Each of the three conditions in Proposition \ref{PropA} implies one of the first two conditions in
Theorem \ref{ThD}. Hence we can apply Theorem \ref{ThD}.
We may assume that Case \ref{ThD} (2) occurs.
Thus $X$ is covered by subvarieties $F_t$ whose
normalizations $F_t^n$ are absolutely minimal
Calabi-Yau varieties of dimension three.
The algebraic Lang hyperbolicity of $X$ implies the same for $F_t$ and also $F_t^n$.
This contradicts
the assumed Conjecture \ref{nALH} in dimension three (cf.~Remark \ref{rThA}).

This proves Proposition \ref{PropA}.

%
%
%
%

%\newpage

\end{document}